\theoremstyle{definition}
\newtheorem{thm}{Theorem}[section]
\crefname{thm}{Theorem}{Theorem}
\newtheorem{dfn}[thm]{Definition}
\crefname{dfn}{Definition}{Definition}
\newtheorem{lem}[thm]{Lemma}
\crefname{lem}{Lemma}{Lemma}
\newtheorem{pro}[thm]{Proposition}
\crefname{pro}{Proposition}{Proposition}
\newtheorem{cor}[thm]{Corollary}
\crefname{cor}{Corollary}{Corollary}
\newtheorem{exa}[thm]{Example}
\crefname{exa}{Example}{Example}
\newtheorem{rmk}[thm]{Remark}
\crefname{rmk}{Remark}{Remark}
\newtheorem{nota}[thm]{Notation}
\crefname{nota}{Notation}{Notation}
\newcommand{\exend}{\unskip\nobreak\hfill$\blacklozenge$}
\newcommand{\F}{\mathbb{F}}
\newcommand{\Z}{\mathbb{Z}}
\newcommand{\Hom}{\mathop{\mathrm{Hom}}\nolimits}
\newcommand{\ilim}[1][]{\mathop{\varinjlim}\limits_{#1}}
\newcommand{\plim}[1][]{\mathop{\varprojlim}\limits_{#1}}
\renewcommand{\lim}[1][]{\mathop{\mathrm{lim}}\limits_{#1}}
\newcommand{\colim}[1][]{\mathop{\mathrm{colim}}\limits_{#1}}
\newcommand{\Cond}{\mathrm{Cond}}
\newcommand{\Ring}{\mathrm{Ring}}
\newcommand{\Perf}{\mathrm{Perf}}
\newcommand{\Stone}{\mathrm{Stone}}
\begin{document}

\title[A new approach to $\delta$-rings via Stone duality]{A new approach to $\delta$-rings via Stone duality}

\author[Y. Yamada]{Yuto Yamada}
\address{Department of Mathematics, Institute of Science Tokyo, 2-12-1 Ookayama, Meguro, Tokyo 152-8551}
\email{yamada.y.f243@m.isct.ac.jp}

\keywords{$\delta$-rings, condensed mathematics, Stone duality}

%\subjclass{13}
%\subjclass[2000]{Primary 13-XX}
%\subjclass[2000]{Primary ; Secondary}
%\date{\today \, (\printtime)}
%\date{\today}

\maketitle

\begin{abstract}
We define "Stone $\delta$-rings" as a new class of $\delta$-rings. As an analogue of \cite{Gre24}, we show that the new class relates light condensed mathematics via Stone duality. Also, we examine some phenomena for this relationship, for example, we observe $\delta$-rings which corresponds to metrizable compact Hausdorff spaces.
\end{abstract}

\tableofcontents

\section{Introduction}

In \cite{Ant23}, Antieau defined the notion of \emph{Stone algebras}, and clarified the relationship between them and profinite sets (\cite[Lemma 3.5]{Ant23}): For any ring $k$ whose idempotent elements are only $\{0,1\}$, there is an equivalence of categories
\[
\Hom_k(-,k):(\Stone_k)^\text{op}\simeq\mathrm{ProFin}:\mathrm{Cont}(-,k),
\]
where $\Stone_k$ denotes the category of "Stone $k$-algebra".

This result obviously generalizes \emph{"classical Stone duality"}, which gives the (anti-)equivalence between Boolean algebras and profinite sets (originally discussed in \cite{Sto34} and \cite{Sto36}).

Profinite sets have some interesting aspects; in particular, we will focus on \emph{light condensed mathematics}, which was introduced by Scholze and Clausen in \cite{AnStk}. (Also, it is a "light" version of "classical" condensed mathematics developed in \cite{LCM}.) Roughly speaking, light condensed mathematics is the theory of sheaves on light profinite sets equipped with the finitary Grothendieck topology defined by finite jointly surjective covers (\cite{AnStk} and \cite[Definition 4.1]{Gre24}):
\[
\Cond(C)^\text{light}=\mathrm{Shv}_\text{eff}(\mathrm{ProFin}^\text{light},C).
\]
In particular, light condensed sets can be considered as a new class of "reasonable" topological spaces. For example, the category of metrizable compact Hausdorff spaces can be embedded into the category of light condensed sets:
\[
\mathrm{CHaus}^\text{light}\xrightarrow{\simeq}\Cond(\mathrm{Set})_\text{qcqs}^\text{light}\subset\Cond(\mathrm{Set})^\text{light}.
\]
Moreover, for light condensed abelian groups  (i.e. abelian group objects in light condensed sets), there is the \emph{condensed cohomology} theory such that it coincides with the sheaf cohomology when it is restricted to metrizable compact Hausdorff spaces $K$ (\cite{AnStk} and \cite[Proposition 2.3.7]{Solid}):
\[
H_{\text{cond}}^i(K,\Z)\simeq H_{\text{sheaf}}^i(K,\Z).
\]\indent
In \cite{Gre24}, Gregoric examined the relationship between fpqc sheaves and "classical" condensed sets via the result of \cite{Ant23}. For example, for any field $k$, the faithfully flatness in Stone algebras is identified with the surjectivity in profinite sets via Stone duality (\cite[Proposition 1.19]{Gre24}), and there is an adjunction
\[
\xymatrix{
{\Cond(\mathrm{Set})}\ar@<1.2ex>[rr]_-{\bot}&&{\mathrm{Shv}_\text{fpqc}(\Ring_k,\mathrm{Set}),}\ar@<1.2ex>[ll]
}
\]
noting that we equip $\Ring_k$ with the fpqc topology (\cite[Theorem C]{Gre24}).

For a fixed prime $p$, the author is interested in the relationship for $k=\F_p$, and proceeds the discussion for $\delta$-rings. In fact, (a part of) the theory of $\F_p$-algebras can be considered as the theory of ($p$-complete) $\delta$-rings via the Witt vector functor. Roughly speaking, $\delta$-rings can be regarded as the rings that admit a "Frobenius lift". 

Furthermore, Bhatt-Scholze developed \emph{prisms} and \emph{prismatic cohomology}, which have deep information about the $p$-adic Hodge theory, (integral) perfectoids ring, etc. However, in this article, we do not discuss the aspects, only the relation to characteristic $p$.

As the analogue of Stone algebras, the author defines \emph{Stone $\delta$-rings} as a new class of $\delta$-rings, which corresponds to the notion of profinite sets. Indeed, the notion gives the following isomorphism of sites.

\begin{thm}[{\cref{thm:Stone duality 2}} and \cref{thm:comparison of sites 2}]
There exists an isomorphism of sites:
\[
\mathrm{Spec}((-)/p):((\Stone^{\delta,\land})^\text{op},\tau_{p\text{-fpqc}})\simeq(\mathrm{ProFin},\tau_\text{eff}):\mathrm{Cont}(-,\Z_p),
\]
where $\tau_{p\text{-fpqc}}$ denotes the Grothendieck topology defined by $p$-completely faithfully flat covers.
\end{thm}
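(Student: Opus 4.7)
The plan is to realize the claimed duality as the composition of Antieau's Stone duality $(\Stone_{\F_p})^\text{op}\simeq\mathrm{ProFin}$ with the Witt-vector equivalence between Stone $\F_p$-algebras and $p$-complete Stone $\delta$-rings. A Stone $\F_p$-algebra is perfect (being a cofiltered limit of finite products of $\F_p$), so by the standard rigidity of $p$-complete $\delta$-rings with perfect reduction, the functor $R\mapsto R/p$ should induce an equivalence between $\Stone^{\delta,\land}$ and $\Stone_{\F_p}$. On objects, $\mathrm{Spec}((-)/p)$ then factors as $R\mapsto R/p\mapsto\Hom_{\F_p}(R/p,\F_p)$, and the candidate quasi-inverse sends a profinite set $S=\plim_i S_i$ with $S_i$ finite to $\mathrm{Cont}(S,\Z_p)=\plim_i\prod_{s\in S_i}\Z_p$, equipped with the componentwise $\delta$-structure inherited from the canonical one on $\Z_p$.

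The first step is to verify that $\mathrm{Cont}(S,\Z_p)$ lies in $\Stone^{\delta,\land}$: it is $p$-complete (the cofiltered limit of $p$-complete rings along surjective transition maps), its mod-$p$ reduction $\mathrm{Cont}(S,\F_p)$ is a Stone $\F_p$-algebra by Antieau, and the canonical $\delta$-structure on $\Z_p$ lifts to each $\prod\Z_p$ and passes to the limit, with Frobenius reducing to the identity on $\mathrm{Cont}(S,\F_p)$ as required by perfectness. That $\mathrm{Spec}((-)/p)$ and $\mathrm{Cont}(-,\Z_p)$ are mutual quasi-inverses then follows by combining Antieau's equivalence with the Witt-vector rigidity mentioned above, and this yields the underlying equivalence of categories (\cref{thm:Stone duality 2}).

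For the comparison of topologies (\cref{thm:comparison of sites 2}), I would show that a map $f:R\to S$ of Stone $\delta$-rings is $p$-completely faithfully flat if and only if $R/p\to S/p$ is faithfully flat in the classical sense. Since $R$ and $S$ are $p$-torsion-free (as Witt vectors of reduced perfect $\F_p$-algebras), derived and classical $p$-adic flatness coincide and the characterization reduces to its mod-$p$ counterpart. Then \cite[Proposition 1.19]{Gre24} identifies faithfully flat covers of Stone $\F_p$-algebras with surjections of profinite sets, and the effective topology $\tau_\text{eff}$ on $\mathrm{ProFin}$ is generated by finite jointly surjective families, so the two topologies match under the equivalence.

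The main obstacle is this last topology comparison, specifically the check that $p$-completely faithful flatness of Stone $\delta$-rings is detected by mod-$p$ faithful flatness. This requires exploiting $p$-torsion-freeness (so that derived and classical $p$-adic notions coincide) together with perfectness of the mod-$p$ reduction (so that Antieau's duality and \cite[Proposition 1.19]{Gre24} apply directly). Once these inputs are secured, the matching of covering families is essentially formal.
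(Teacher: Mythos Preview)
Your approach is essentially identical to the paper's: both factor the equivalence as $(\Stone^{\delta,\land})^\text{op}\xrightarrow{\bmod p}(\Stone_{\F_p})^\text{op}\xrightarrow{\mathrm{Spec}}\mathrm{ProFin}$ using the Witt-vector/mod-$p$ equivalence on perfect $p$-complete $\delta$-rings (\cref{thm:Witt-mod p}) together with Antieau--Gregoric Stone duality, and both reduce the topology comparison to the statement that $p$-completely faithful flatness equals mod-$p$ faithful flatness via $p$-torsion-freeness (the paper's \cref{lem:cover comparison}) before invoking \cite[Proposition 1.19/1.20]{Gre24}. Two minor slips to correct: Stone $\F_p$-algebras are filtered \emph{colimits} (not cofiltered limits) of finite products of $\F_p$, and $\mathrm{Cont}(S,\Z_p)$ is the $p$-completion of $\colim_i\prod_{S_i}\Z_p$ rather than $\plim_i\prod_{S_i}\Z_p$ (cf.\ \cref{lem:Stone duality}); neither affects your overall argument.
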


Thanks to this comparison, we can compare light condensed sets with "$\delta$-stacks". The notion is the main target of this article, which the author expect to reflect the "topological" aspects of (general) $p$-complete $\delta$-rings for the future works. We use the notion of "continuous functors" as in \cite[\href{https://stacks.math.columbia.edu/tag/00WU}{[00WU]}]{Sta} and obtain the following relationship between light condensed sets and $\delta$-rings in terms of adjunctions:

\begin{thm}[{\cref{cor:morphism of topoi}}]
We obtain the following adjunction:
\[
\xymatrix{
{\mathrm{TD}\mathrm{Stk}^{\delta,\land}}\ar@<1.2ex>[rr]^-{\psi^\ast}_-{\bot}&&{\Cond(\mathrm{Set})^\text{light},}\ar@<1.2ex>[ll]^-{\psi_\ast}
}
\]
which is induced by the functor $\Stone^{\delta,\land}\to\mathrm{ProFin}^\text{light}\ ;\ A\longmapsto\mathrm{Cont}(A,\Z_p)$.
\end{thm}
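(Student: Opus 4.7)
The strategy is to realize the adjunction as the topos-theoretic shadow of a continuous functor of sites, applying the Stacks Project machinery \cite[\href{https://stacks.math.columbia.edu/tag/00WU}{[00WU]}]{Sta} once the preceding Stone duality is in hand. By \cref{thm:comparison of sites 2}, the assignment $A\mapsto\mathrm{Cont}(A,\Z_p)$ realizes an isomorphism of sites $((\Stone^{\delta,\land})^{\text{op}},\tau_{p\text{-fpqc}})\simeq(\mathrm{ProFin},\tau_{\text{eff}})$. Composing this equivalence with the canonical inclusion $\mathrm{ProFin}^{\text{light}}\hookrightarrow\mathrm{ProFin}$ (on the appropriate subcategory of $\Stone^{\delta,\land}$ so that the image consists of light profinite sets), the functor $\psi$ in the statement is identified with a functor of sites from $((\Stone^{\delta,\land})^{\text{op}},\tau_{p\text{-fpqc}})$ to $(\mathrm{ProFin}^{\text{light}},\tau_{\text{eff}})$.

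The core technical step is to check that $\psi$ is continuous in the sense of [00WU]. This amounts to two conditions: $\psi$ must send $\tau_{p\text{-fpqc}}$-coverings to $\tau_{\text{eff}}$-coverings in $\mathrm{ProFin}^{\text{light}}$, and $\psi$ must preserve the fiber products that appear along coverings. The first is essentially the content of \cref{thm:comparison of sites 2}, which matches $p$-completely faithfully flat covers with finite jointly surjective families of profinite sets. The second is a parallel consequence of the same Stone duality: pushouts in $\Stone^{\delta,\land}$ (equivalently, fiber products in the opposite category) correspond bijectively to fiber products of profinite sets, and these agree with the fiber products computed intrinsically in $\mathrm{ProFin}^{\text{light}}$ whenever the inputs are light. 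Both verifications therefore reduce to cited facts.

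With continuity established, [00WU] directly yields the adjoint pair
\[
\psi^\ast:\mathrm{Shv}_{\text{eff}}((\Stone^{\delta,\land})^{\text{op}},\mathrm{Set})\rightleftarrows\mathrm{Shv}_{\text{eff}}(\mathrm{ProFin}^{\text{light}},\mathrm{Set}):\psi_\ast,
\]
in which $\psi_\ast$ is precomposition with $\psi$ (which lands in sheaves thanks to continuity) and $\psi^\ast$ is its left adjoint, computed as the sheafification of the left Kan extension. By definition, the left-hand topos is $\mathrm{TDStk}^{\delta,\land}$ and the right-hand one is $\Cond(\mathrm{Set})^{\text{light}}$, so this is exactly the claimed adjunction. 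The main obstacle I anticipate is purely bookkeeping: one has to pin down precisely which Stone $\delta$-rings give light profinite duals so that $\psi$ does land in $\mathrm{ProFin}^{\text{light}}$, and one has to check that the effective covers on $\mathrm{ProFin}^{\text{light}}$ transport cleanly under the inclusion into $\mathrm{ProFin}$; beyond these setup checks, the argument is formal, inheriting all real content from the previously-established Stone duality.
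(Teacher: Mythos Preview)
Your proposal is correct and follows essentially the same route as the paper: verify that the functor $u:\mathrm{TD}^{\delta,\land}\to\mathrm{ProFin}^{\text{light}}$ is continuous (covers go to covers, relevant fiber products are preserved) and then invoke the Stacks Project machinery for continuous functors of sites. Two small remarks: the tag \texttt{[00WU]} you cite is only the \emph{definition} of a continuous functor---the results that actually produce the adjunction and the morphism of topoi are \texttt{[00WX]} and \texttt{[00X6]}, which is what the paper cites; and the paper goes slightly further by checking that $u$ preserves \emph{all} finite limits (in particular the final object $\Z_p\mapsto\ast$), which is what upgrades the bare adjunction to a genuine morphism of topoi via \texttt{[00X6]}, whereas your continuity check alone only yields the adjunction stated in the theorem.
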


As an example, in \cref{exa:Betti delta-stacks}, we obtain the notion of \emph{"Betti $\delta$-stacks"} which is an analogue of Betti stack defined in \cite{Sch24}.

\subsection*{Notation}

We fix a prime $p$ throughout this article, and the notion $(-)^\land$ means classical $p$-completion, but not derived completion. Also, for any ring $A$, the category $\mathcal{D}(A)$ means the derived category of $A$-modules.

\subsection*{Acknowledgement}

The author would like to thank Kazuki Hayashi, Ryo Ishizuka and Kazuma Shimomoto for their beneficial discussions for $\delta$-rings, Makoto Enokizono and Ryo Suzuki for light condensed mathematics, and Thomas Geisser who was my adviser in the master's degree and gave me chances to study (light) condensed mathematics and $\delta$-rings.

\section{Stone Duality}

In this section, we will review the "new" Stone duality in characteristic $p$, and examine a connection with the usual $\F_p$-algebras.

\subsection{Stone $\F_p$-Algebras}

In \cite{Ant23}, "classical" Stone duality is generalized. Let us describe this generalization.

\begin{dfn}[{\cite[Definition 3.2]{Ant23} and {\cite[Definition 1.1]{Gre24}}}, Stone algebra]\label{dfn:Stone algebra}
Let $k$ be a ring. The category $\Stone_k\subset\Ring_k$ of \emph{Stone $k$-algebras} is the full subcategory of $\Ring_k$ spanned under filtered colimits by $k^S$ for a finite set $S$. (Note that we consider $k^S$ as the ring of continuous maps from $S$ to the discrete ring $k$.)
\exend
\end{dfn}

\begin{thm}[{\cite[Lemma 3.5]{Ant23}} and {\cite[Theorem 1.2 and Proposition 1.10]{Gre24}}, Stone dulality]\label{thm:Stone duality}
Let $k$ be a ring with idempotent elements $\{0,1\}$. The following functors induce an equivalence of categories $\mathrm{ProFin}\simeq\Stone_k^\text{op}$.
\[
\Phi:\mathrm{ProFin}\ni S\longmapsto\mathrm{Cont}(S,k)\in\Stone_k^\text{op},\quad\Psi:\Stone_k^\text{op}\ni A\longmapsto\Hom_k(A,k)\in\mathrm{ProFin}.
\]
Moreover, if $k$ is a field, $\Psi$ can be identified with the functor given by $A\longmapsto\mathrm{Spec}(A)$.
\end{thm}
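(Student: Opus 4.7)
The plan is to first verify that both functors $\Phi$ and $\Psi$ are well defined, then construct natural unit and counit transformations, reduce to the finite case, and finally match $\Psi$ with $\mathrm{Spec}$ when $k$ is a field.

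First I would check that $\Phi$ lands in $\Stone_k^{\mathrm{op}}$. Writing a profinite set as a cofiltered limit $S=\plim_i S_i$ of finite sets, continuity of maps to the discrete ring $k$ gives $\mathrm{Cont}(S,k)=\ilim_i \mathrm{Cont}(S_i,k)=\ilim_i k^{S_i}$, which is a Stone $k$-algebra by definition. For $\Psi$ I would first establish the key finite computation: for a finite set $S$, the natural map $S\to\Hom_k(k^S,k)$, sending $s$ to the $s$-th projection, is a bijection. Every element $e_s\in k^S$ is idempotent, so any $k$-algebra map $k^S\to k$ sends $e_s$ to an idempotent of $k$, which is $0$ or $1$ by hypothesis; since $\sum_s e_s=1$ and the $e_s$ are orthogonal, exactly one $e_s$ goes to $1$, and this determines the map as projection. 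This is the step where the idempotent hypothesis on $k$ is used, and it is the crucial observation; once this is in hand, the rest is formal. It follows that for a Stone $k$-algebra $A=\ilim_i k^{S_i}$ one has
\[
\Psi(A)=\Hom_k(A,k)=\plim_i \Hom_k(k^{S_i},k)=\plim_i S_i,
\]
which is a profinite set, naturally topologized as a cofiltered limit of finite discrete sets.

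Next I would produce the unit $\eta_S\colon S\to\Psi\Phi(S)$ by $s\mapsto \mathrm{ev}_s$ and the counit $\varepsilon_A\colon A\to\Phi\Psi(A)$ by $a\mapsto (\varphi\mapsto \varphi(a))$; both are natural, and the triangle identities are formal. To see they are isomorphisms I would reduce to the finite level: for $S$ finite, $\eta_S$ is exactly the bijection above, and $\varepsilon_{k^S}$ identifies $k^S$ with $\mathrm{Cont}(S,k)$ tautologically. Then I pass to the general case: $\Psi$ turns filtered colimits of Stone $k$-algebras into cofiltered limits of profinite sets (this requires a brief argument that $\Hom_k(-,k)$ preserves filtered colimits in the Stone subcategory, which reduces via the presentation to the finite case), and $\Phi$ turns cofiltered limits of profinite sets into filtered colimits by continuity. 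Therefore both unit and counit are (co)limits of isomorphisms, hence isomorphisms, giving the equivalence.

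Finally, for $k$ a field I would identify $\Psi$ with $\mathrm{Spec}$. For finite $S$ one has $\mathrm{Spec}(k^S)=S$ as a set (with discrete topology), and each residue field is $k$, so $\mathrm{Spec}(k^S)=\Hom_k(k^S,k)$ naturally. For a general Stone $k$-algebra $A=\ilim_i k^{S_i}$, the spectrum of a filtered colimit of rings is the cofiltered limit of spectra as topological spaces, yielding
\[
\mathrm{Spec}(A)=\plim_i \mathrm{Spec}(k^{S_i})=\plim_i S_i=\Psi(A),
\]
which is a functorial identification. The main obstacle is really the finite-level computation $\Hom_k(k^S,k)\cong S$, together with ensuring that $\Hom_k(-,k)$ genuinely commutes with the filtered colimit presentations within $\Stone_k$; both are mild but essential, and everything else assembles formally from them.
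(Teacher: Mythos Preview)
Your proof is correct and follows the standard argument that underlies the references the paper cites; in fact the paper itself gives no proof beyond pointing to \cite{Ant23} and \cite{Gre24}, so your self-contained treatment is strictly more detailed than what appears there. One minor remark: the step you flag as needing care, that $\Hom_k(-,k)$ turns the filtered colimit presentation of a Stone algebra into a cofiltered limit, is actually automatic from the universal property of colimits (a contravariant $\Hom$ always sends colimits to limits), so no extra argument is needed there; the only genuinely nontrivial ingredient is exactly the finite-level bijection $S\cong\Hom_k(k^S,k)$ via the idempotent hypothesis, which you handle correctly.
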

\begin{proof}
The first statement is \cite[Lemma 3.5]{Ant23} and \cite[Theorem 1.2]{Gre24}. The second statement is \cite[Proposition 1.10]{Gre24}.
\end{proof}

\subsection{Perfect $\F_p$-Algebras}

We will discuss the relationship between perfect $\F_p$-algebras and Stone $\F_p$-algebras.

\begin{dfn}[$p$-Boolean algebra]\label{dfn:p-Bool}
An $\F_p$-algebra $A$ is a \emph{$p$-Boolean algebra} if for all $a\in A$, we have $a^p=a$. We let $\mathrm{Bool}_{\F_p}$ denote the category of $p$-Boolean algebras.
\exend
\end{dfn}

Indeed, Stone $\F_p$-algebras discussed in the previous subsection are $p$-Booloean algebras.

\begin{pro}[{\cite[Proposition 1.26]{Gre24}}]\label{pro:p-Bool}
The notion of Stone $\F_p$-algebras coincides with the notion of $p$-Boolean algebras.
\end{pro}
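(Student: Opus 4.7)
The plan is to prove both inclusions. For the direction $\Stone_{\F_p}\subseteq\mathrm{Bool}_{\F_p}$, I would note that $\F_p$ itself is $p$-Boolean by Fermat's little theorem, hence so is $\F_p^S$ for any finite $S$ (the identity $a^p=a$ is checked coordinatewise). The condition ``$a^p=a$ for all $a$'' is preserved under arbitrary colimits in $\Ring_{\F_p}$, so the full subcategory $\mathrm{Bool}_{\F_p}\subseteq\Ring_{\F_p}$ is closed under filtered colimits and contains all $\F_p^S$, whence it contains $\Stone_{\F_p}$ by the minimality in \cref{dfn:Stone algebra}.

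The substantive direction is $\mathrm{Bool}_{\F_p}\subseteq\Stone_{\F_p}$. The key reduction is that any $p$-Boolean algebra $A$ is the filtered colimit of its finitely generated $\F_p$-subalgebras, each of which is again $p$-Boolean; it therefore suffices to show that every \emph{finitely generated} $p$-Boolean algebra is isomorphic to $\F_p^S$ for some finite set $S$. Given generators $a_1,\ldots,a_n\in A$, the relations $a_i^p=a_i$ identify $A$ with a quotient of
\[
R_n=\F_p[x_1,\ldots,x_n]/(x_1^p-x_1,\ldots,x_n^p-x_n).
\]
Since $x_i^p-x_i=\prod_{c\in\F_p}(x_i-c)$ splits into distinct linear factors in $\F_p[x_i]$, the Chinese Remainder Theorem gives $R_n\cong\F_p^{p^n}$, a finite product of copies of $\F_p$.

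The final step is to observe that any quotient of a finite product $\F_p^N$ of fields is again a product $\F_p^{N'}$ for some $N'\le N$: ideals of $\F_p^N$ correspond to subsets of $\{1,\ldots,N\}$ (an ideal is determined by the coordinates on which it vanishes), and the quotient is the product over the complementary subset. Applying this to the surjection $R_n\twoheadrightarrow A$ yields $A\cong\F_p^S$ for a finite $S$, as required. The main obstacle is really just the CRT splitting together with the structure of ideals in a finite product of fields; once these are in hand, writing $A$ as a filtered colimit of its finitely generated subalgebras places it in $\Stone_{\F_p}$ by \cref{dfn:Stone algebra}, completing the proof.
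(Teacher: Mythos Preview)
Your argument is correct. The forward inclusion is immediate from closure of the identity $a^p=a$ under products and filtered colimits, and for the reverse inclusion your reduction to the finitely generated case followed by the CRT splitting of $\F_p[x_1,\dots,x_n]/(x_i^p-x_i)\cong\F_p^{p^n}$ and the classification of ideals in a finite product of fields is clean and complete.

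Note that the paper itself does not supply a proof of this proposition; it simply records the statement and cites \cite[Proposition 1.26]{Gre24}. Your write-up is essentially the standard elementary proof one would expect there (and is in the spirit of Gregoric's argument). An alternative route, slightly more geometric, is to observe that a $p$-Boolean algebra $A$ is reduced with every residue field equal to $\F_p$, so that $\mathrm{Spec}(A)$ is a profinite set and $A\cong\mathrm{Cont}(\mathrm{Spec}(A),\F_p)$; but your purely algebraic approach via CRT is just as efficient and avoids any appeal to the spectrum.
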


\begin{lem}[{\cite[Remark 1.27 and Corollary 1.28]{Gre24}}]\label{lem:(co)invariant}
The forgetful functor $U:\Stone_{\F_p}\to\Ring_{\F_p}$ admits right and left adjoint functors.
\end{lem}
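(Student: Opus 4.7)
The plan is to invoke \cref{pro:p-Bool} and identify $\Stone_{\F_p}$ with $\mathrm{Bool}_{\F_p}$, so that the task reduces to producing right and left adjoints of the forgetful functor $U:\mathrm{Bool}_{\F_p}\to\Ring_{\F_p}$. Both adjoints should admit completely explicit descriptions; the only nontrivial ingredient is that the Frobenius $\varphi:r\mapsto r^p$ is a ring homomorphism in characteristic $p$, so the $p$-Boolean condition $r^p=r$ is preserved under ring operations in a straightforward way.

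For the right adjoint, my construction would send an $\F_p$-algebra $R$ to the subring of Frobenius fixed points
\[
R^{\varphi=1}:=\{r\in R:r^p=r\}.
\]
Closure under addition and multiplication follows from $(r+s)^p=r^p+s^p$ and $(rs)^p=r^ps^p$, while $0$ and $1$ are obviously fixed points; thus $R^{\varphi=1}$ is itself a $p$-Boolean algebra, depending functorially on $R$. Given any $\F_p$-algebra map $f:A\to R$ from a $p$-Boolean algebra $A$, the identity $f(a)^p=f(a^p)=f(a)$ forces the image of $f$ into $R^{\varphi=1}$, which provides the natural bijection $\Hom_{\mathrm{Bool}_{\F_p}}(A,R^{\varphi=1})\simeq\Hom_{\Ring_{\F_p}}(U(A),R)$.

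For the left adjoint, I would send $R$ to the quotient
\[
R_{p\text{-Bool}}:=R/(r^p-r:r\in R).
\]
For any $s\in R$, the relation $s^p=s$ holds in the image, so $R_{p\text{-Bool}}$ is genuinely $p$-Boolean; conversely, any ring map $R\to B$ into a $p$-Boolean algebra $B$ kills every element of the form $r^p-r$, and so factors uniquely through $R_{p\text{-Bool}}$, yielding the required natural bijection. I do not anticipate any real obstacle in either verification; the only point deserving a careful glance is the subring closure in the right-adjoint construction, which is exactly the freshman's dream $(r+s)^p=r^p+s^p$ in characteristic $p$.
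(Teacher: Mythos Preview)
Your proposal is correct and follows essentially the same approach as the paper: identify $\Stone_{\F_p}$ with $\mathrm{Bool}_{\F_p}$ via \cref{pro:p-Bool} and then exhibit the right and left adjoints explicitly as the Frobenius fixed points $\ker(\mathrm{Frob}-1)$ and the quotient $\mathrm{coker}(\mathrm{Frob}-1)$, respectively. The paper's proof is terser and does not spell out the adjunction bijections, but the constructions and the overall strategy are the same.
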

\begin{proof}
By \cref{pro:p-Bool}, we identify $\Stone_{\F_p}$ with $\mathrm{Bool}_{\F_p}$. We explicitly obtain the desired right (resp. left) adjoint functor given by $\mathrm{ker}(\mathrm{Frob}-1)$ (resp. $\mathrm{coker}(\mathrm{Frob}-1)$).
\end{proof}

\begin{rmk}[{\cite[Proposition 1.6]{Gre24}}]
\cref{thm:Stone duality} gives the locally presentability of $\Stone_k$. So, even if we replace $\F_p$ by any ring $k$ whose idempotent elements are only $\{0,1\}$, the existence of a right adjoint functor for the forgetful functor $\Stone_k\to\Ring_k$ holds true.
\end{rmk}

\begin{dfn}[(co)invariant]\label{dfn:(co)invariant}
The right (resp. left) adjoint functor of the forgetful functor $\Stone_{\F_p}\to\Ring_{\F_p}$ is denoted by $(-)^{\mathrm{Frob}=1}$ (resp. $(-)_{\mathrm{Frob}=1}$), and called \emph{invariant} (resp. \emph{coinvariant}).
\exend
\end{dfn}

Finally, we relate $p$-Boolean algebras to perfect $\F_p$-algebras.

\begin{dfn}[{\cite[Definition 3.4.1]{Prism}}, (co)perfection]\label{dfn:(co)perfection}
The inclusion $\Perf_{\F_p}\subset\Ring_{\F_p}$ has a left (resp. right) adjoint functor $(-)_\text{perf}$ (resp. $(-)^\text{perf}$), which is called \emph{coperfection} (resp. \emph{perfection}):
\[
\xymatrix{
{\Perf_{\F_p}}\ar@{^{(}->}[rr]^-{\bot}_-{\bot}&&{\Ring_{\F_p}.}\ar@/^12pt/[ll]^-{\text{perfction}}\ar@/_12pt/[ll]_-{\text{coperfction}}
}
\]
The adjunction functors are given by the following sequential (co)limits:
\[
A_\text{perf}=\ilim(A\xrightarrow{\text{Frob}}A\xrightarrow{\text{Frob}}\cdots),\quad A^\text{perf}=\plim(\cdots\xrightarrow{\text{Frob}}A\xrightarrow{\text{Frob}}A).
\]
\exend
\end{dfn}

\begin{rmk}
Coperfection may be called \emph{direct limit perfection} (,for example, as in \cite{BS22}).
\end{rmk}

\begin{thm}[{\cite[Lemma 4.9]{Ant23}}]\label{thm:adjoint 1}
The following adjoint diagram exists.
\[
\xymatrix{
{\Stone_{\F_p}}\ar[rr]^-{\bot}_-{\bot}&&{\Perf_{\F_p}}\ar@/^12pt/[ll]^-{\text{invariant}}\ar@/_12pt/[ll]_-{\text{coinvariant}}\ar@{^{(}->}[rr]^-{\bot}_-{\bot}&&{\Ring_{\F_p}.}\ar@/^12pt/[ll]^-{\text{perfection}}\ar@/_12pt/[ll]_-{\text{coperfection}}
}
\]
\end{thm}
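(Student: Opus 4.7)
The plan is to leverage the adjunctions of the forgetful functor $U:\Stone_{\F_p}\to\Ring_{\F_p}$ established in \cref{lem:(co)invariant}, together with the observation (via \cref{pro:p-Bool}) that every Stone $\F_p$-algebra is $p$-Boolean, hence perfect. Indeed, in a $p$-Boolean algebra the Frobenius coincides with the identity and is trivially bijective, so $U$ factors as
\[
\Stone_{\F_p}\xrightarrow{\iota}\Perf_{\F_p}\xrightarrow{j}\Ring_{\F_p},
\]
where $j$ is the fully faithful inclusion appearing in \cref{dfn:(co)perfection}. The rest of the proof will show that the adjoints of $U$ descend along this factorization to adjoints for $\iota$.

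The underlying general principle is as follows: if $U=j\circ\iota$ admits a right adjoint $R$ and $j$ is fully faithful, then for every $A\in\Stone_{\F_p}$ and $B\in\Perf_{\F_p}$ one computes
\[
\Hom_{\Perf_{\F_p}}(\iota(A),B)=\Hom_{\Ring_{\F_p}}(j\iota(A),jB)=\Hom_{\Ring_{\F_p}}(U(A),B)=\Hom_{\Stone_{\F_p}}(A,R(B)),
\]
so $R\circ j$ is right adjoint to $\iota$. By \cref{lem:(co)invariant}, this right adjoint is the restriction of the invariant functor $B\mapsto B^{\mathrm{Frob}=1}=\{b\in B : b^p=b\}$; its output is automatically a Stone $\F_p$-algebra because every element of this subring satisfies the $p$-Boolean identity. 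A symmetric argument on the other side produces a left adjoint $\iota^L=(-)_{\mathrm{Frob}=1}$, whose output $B/\langle b^p-b\mid b\in B\rangle$ is manifestly $p$-Boolean by construction.

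Combining this adjoint triple with the (co)perfection adjunction of \cref{dfn:(co)perfection} assembles the displayed diagram. I do not anticipate a serious obstacle: the only concrete checks are (i) the factorization $U=j\circ\iota$ (immediate from $a^p=a$), and (ii) that $B^{\mathrm{Frob}=1}$ and $B_{\mathrm{Frob}=1}$ really lie in $\Stone_{\F_p}=\mathrm{Bool}_{\F_p}$ when $B$ is perfect, both of which are elementary. If anything is subtle, it is psychological: once one notices that $U$ factors through the full subcategory $\Perf_{\F_p}\subset\Ring_{\F_p}$, the adjoints for $\iota$ come for free from those of $U$ via the fully faithfulness of $j$.
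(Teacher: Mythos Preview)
Your proof is correct and follows essentially the same approach as the paper: both arguments rest on the observation that Stone $\F_p$-algebras are already perfect (so the forgetful functor $U$ factors through $\Perf_{\F_p}$), whence the (co)invariant adjoints of $U$ from \cref{lem:(co)invariant} restrict along the fully faithful inclusion $j:\Perf_{\F_p}\hookrightarrow\Ring_{\F_p}$ to give the desired adjoints for $\iota$. Your write-up is more explicit about the categorical mechanism (the general principle that adjoints of $j\circ\iota$ with $j$ fully faithful yield adjoints of $\iota$), whereas the paper compresses this into a single sentence.
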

\begin{proof}
It follows from the fact that both Frobenius (co)invariant functors factor through $\Perf_{\F_p}$ since Stone $\F_p$-algebra is already perfect.
\end{proof}

\section{$\delta$-Rings}

In this section, we will review the notion of $\delta$-rings and define a new class "Stone $\delta$-rings".

\subsection{$\delta$-Rings}

We will examine the definition of $\delta$-rings and the rings of Witt vectors.

\begin{dfn}[{\cite[Definition 2.1 and Remark 2.2]{BS22}}, $\delta$-rings]\label{dfn:delta-ring}
A ring $A$ has a \emph{$\delta$-ring structure} on $A$ if $A$ admits a map $\delta:A\to A$ satisfying the following properties:
\begin{enumerate}
    \item We have $\delta(1)=0$.
    \item For any $x,y\in A$, the following holds:
    \[
    \delta(xy)=x^p\delta(y)+\delta(x)y^p+p\delta(x)\delta(y).
    \]
    \item For any $x,y\in A$, the following holds:
    \[
    \delta(x+y)=\delta(x)+\delta(y)+\dfrac{(x^p+y^p)-(x+y)^p}{p}.
    \]
\end{enumerate}
We often call $A$ by \emph{$\delta$-ring} if $A$ has a $\delta$-ring structure. Also, when $A$ has a $\delta$-ring structure, then the induced ring map $\varphi:A\to A$ is called a \emph{Frobenius lift}:
\[
\varphi(x):=x^p+p\delta(x).
\]
Furthermore, for any $\delta$-rings $A,B$, a ring map $f:A\to B$ is a \emph{$\delta$-ring map} if it satisfies $f\circ\delta_A=\delta_B\circ f$ (where $\delta_A$ (resp. $\delta_B$) denotes a $\delta$-ring structure on $A$ (resp. B)). We let $\Ring^\delta$ denote the category of $\delta$-rings with $\delta$-ring maps. 
\exend
\end{dfn}

\begin{pro}[{\cite[Remark 2.7]{BS22}}]
The category $\Ring^\delta$ has limits and colimits, and the forgetful functor $\Ring^\delta\to\Ring$ preserves limits and colimits. Moreover, the forgetful functor $\Ring^\delta\to\Ring$ admits right and left adjoint functors.
\end{pro}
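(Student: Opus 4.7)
The plan is to construct limits and colimits explicitly by lifting the corresponding constructions from $\Ring$, and then to build the two adjoints separately; preservation of limits and colimits by the forgetful functor then follows (and is in fact obtained directly from the constructions).

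For limits, given a diagram $(A_i)_{i \in I}$ in $\Ring^\delta$, I take $A := \plim_i A_i$ in $\Ring$ and observe that the individual $\delta_i$ commute with the transition maps (because those are $\delta$-ring maps), so the universal property of the limit produces a unique map $\delta : A \to A$ with $\mathrm{pr}_i \circ \delta = \delta_i \circ \mathrm{pr}_i$ for each $i$. The three axioms of \cref{dfn:delta-ring} are equalities of composites of ring operations and $\delta$, so they hold on $A$ because they hold on each $A_i$ and the projections are jointly injective. A routine check of the universal property then gives $(A,\delta)$ as the limit in $\Ring^\delta$.

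For colimits, the efficient route is the well-known reformulation that a $\delta$-structure on $A$ is the same datum as a ring homomorphism $s_A : A \to W_2(A)$ splitting the projection $w_0 : W_2(A) \to A$, where $W_2$ denotes length-two $p$-typical Witt vectors. Given $(A_i, s_i)$ and $A := \ilim_i A_i$ in $\Ring$, the composites $A_i \xrightarrow{s_i} W_2(A_i) \to W_2(A)$ are compatible with the transition maps, and hence descend to a unique ring map $s : A \to W_2(A)$; the identity $w_0 \circ s = \mathrm{id}_A$ is inherited termwise from $w_0 \circ s_i = \mathrm{id}_{A_i}$. One checks that a ring map out of $A$ is a $\delta$-ring map iff each restriction $A_i \to B$ commutes with sections, which gives $(A,s)$ as the colimit in $\Ring^\delta$.

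Next I exhibit the right adjoint as the $p$-typical Witt vector functor $W : \Ring \to \Ring^\delta$, $R \mapsto \plim_n W_n(R)$, with its canonical $\delta$-structure (whose Frobenius lift is the Witt Frobenius); the adjunction $\Hom_{\Ring^\delta}(A, W(R)) \simeq \Hom_\Ring(A, R)$ is witnessed by the first ghost component $w_0 : W(R) \to R$, with inverse sending $g : A \to R$ to the unique $\delta$-ring map whose $n$-th ghost component is $g \circ \varphi_A^n$. For the left adjoint, I build the free $\delta$-ring $F(R)$ as the quotient of the polynomial ring $R[x_{r,n} : r \in R,\, n \geq 1]$ by the ideal enforcing the three identities of \cref{dfn:delta-ring} (with $x_{r,n}$ playing the role of $\delta^n(r)$), and set $\delta(x_{r,n}) = x_{r,n+1}$ on generators, extending by the axioms. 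The main obstacle is the right adjoint: setting up the $\delta$-structure on $W(R)$ and verifying its universal property is the classical Witt vector calculus, which I would quote from the standard literature rather than rederive; the remaining verifications are routine bookkeeping.
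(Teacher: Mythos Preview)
Your argument is correct, but it follows a genuinely different route from the paper's. The paper does not construct either adjoint explicitly: it only addresses the existence of adjoints, and does so via Freyd's adjoint functor theorem by verifying the solution set condition on both sides. Concretely, for the left adjoint the paper shows that any ring map $B\to A$ into a $\delta$-ring factors through the $\delta$-subring of $A$ generated by the image (whose cardinality is bounded by $\max\{\#B,\omega\}$); for the right adjoint it shows that any ring map $A\to B$ out of a $\delta$-ring factors through $A/I$ with $I=\{x\mid \delta^m(x)\in\ker f\ \text{for all }m\}$, and bounds $\#(A/I)$ via the injection $x\mapsto (f(x),f(\delta(x)),\dots)$ into $B^{\mathbb N}$. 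Your approach instead builds the adjoints by hand: $W(-)$ with its Witt-vector $\delta$-structure on the right, and a free $\delta$-ring presentation on the left, together with the $W_2$-section description to handle colimits. What your approach buys is explicit formulas (and in fact the paper immediately \emph{uses} that the right adjoint is $W(-)$ in the next definition), at the cost of importing the Witt-vector calculus; what the paper's approach buys is economy and self-containedness, at the cost of yielding bare existence. Both are standard and valid.
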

\begin{proof}
In this article, we only prove the existence of adjoint functors. By the Freyd's adjoint functor theorem, it suffices to show that \cite[Exercise 2.5.12 and 2.5.14]{Prism}.

For the existence of the left adjoint functor, we prove \cite[Exercise 2.5.12]{Prism}. It suffices to show that for every ring $B$, every $\delta$-ring $A$ and every ring map $f:B\to A$, there exists a $\delta$-ring $A'$ such that $\#A'\leq\max\{\#B,\omega\}$ and $f$ factors through $A'$ in $\Ring^\delta$. The desired $\delta$-ring $A'$ is indeed the $\delta$-subring of $A$ generated by $f(B)$.

For the existence of the right adjoint functor, we prove \cite[Exercise 2.5.14]{Prism}. Similarly, it suffices to show that for every ring $B$, every $\delta$-ring $A$ and every ring map $f:A\to B$, there exists a $\delta$-ring $A'$ such that $\#A'\leq\max\{\#B,\omega\}$ and $f$ factors through $A'$ in $\Ring^\delta$. We define the ideal $I$ of $A$ as $\{x\in A\mid\delta^m(x)\in\mathrm{ker}(f)\ \text{for all}\ m\in\Z_{\geq0}\}$. Since we have $\delta(I)\subset I$, $A/I$ is a $\delta$-ring compatible with the $\delta$-structure of $A$ (by \cite[Lemma 2.9]{BS22}). We obtain a (set-theoretical) injection $A/I\hookrightarrow B\times B\times B\times B\times\cdots\ ;\ x\longmapsto(f(x),f(\delta(x)),\ldots)$, which implies that $A/I$ is the desired $\delta$-ring.
\end{proof}

\begin{dfn}[{\cite[Remark 2.7]{BS22}}, Witt vector functor and free $\delta$-rings]
The functor \emph{$W(-):\Ring\to\Ring^\delta$} denotes the right adjoint functor of the forgetful functor $\Ring^\delta\to\Ring$.
\exend
\end{dfn}

\begin{rmk}[{\cite[Remark 2.7]{BS22}} and {\cite[Remark 3.2.2]{Prism}}]
The right adjoint functor $W(-)$ of the forgetful functor coincides with the classical $p$-typical Witt vectors.
\exend
\end{rmk}

\subsection{Stone $\delta$-Rings}

We will discuss the relationship between $\delta$-rings and perfect $\F_p$-algebras.

\begin{dfn}[{\cite[Definition 2.26]{BS22}}, perfect $\delta$-rings]
A $\delta$-ring is \emph{perfect} if its Frobenius lift is an isomorphism.
\exend
\end{dfn}

\begin{rmk}\label{rmk:some analogue of char p}
Recall that there are the following properties for $\F_p$-algebras:
\begin{enumerate}
    \item An $\F_p$-algebra is reduced if and only if its Frobenius map is injective.
    \item An $\F_p$-algebra admits a surjection from a perfect $\F_p$-algebra if and only if its Frobenius map is surjective (i.e. it is \emph{semiperfect}).
\end{enumerate}
As the analogue, we have the following properties for $\delta$-rings:
\begin{enumerate}
    \item A $\delta$-ring is $p$-torsion-free if its Frobenius lift is injective (\cite[Lemma 2.28(1)]{BS22}).
    \item If an $\F_p$-algebra $A$ is reduced (; equivalently, $W(A)$ is reduced), then the Frobenius lift on $W(A)$ is injective (\cite[Remark 3.4.7]{Prism}).
    \item An $\F_p$-algebra $A$ is semiperfect if and only if the Frobenius lift on $W(A)$ is surjective (\cite[Remark 3.4.8]{Prism}).
\end{enumerate}
\exend
\end{rmk}

\begin{lem}[{\cite[Lemma 3.3.2]{Prism}}]\label{lem:strict p}
Let $A$ be a perfect $\F_p$-algebra. The ring $W(A)$ of Witt vectors is $p$-torsion-free and $p$-complete, and we have $W(A)/p\simeq A$.
\end{lem}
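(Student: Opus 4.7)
The plan is to prove the three claims---$p$-torsion-freeness, $W(A)/pW(A)\simeq A$, and $p$-completeness---simultaneously using the Frobenius $F$ and Verschiebung $V$ on $W(A)$, exploiting the fact that perfection of $A$ makes $F$ an automorphism of $W(A)$.

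First I would recall the standard structure of $W(A)$ for an $\F_p$-algebra: in characteristic $p$ the Frobenius acts coordinate-wise as $F(a_0, a_1, \ldots)=(a_0^p, a_1^p, \ldots)$, the Verschiebung is the shift $V(a_0, a_1, \ldots)=(0, a_0, a_1, \ldots)$, and these satisfy $FV=VF=p$ as endomorphisms of $W(A)$. I would cite these from a standard reference (e.g.~\cite{Prism}) and then observe that perfectness of $A$ makes $F$ a bijection on $W(A)$, since it acts coordinate-wise by the automorphism $\mathrm{Frob}_A$.

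Since $V$ is visibly injective (it is a shift), injectivity of $p=VF$ on $W(A)$ follows immediately, proving $p$-torsion-freeness. Using surjectivity of $F$, we obtain $pW(A)=V(F(W(A)))=V(W(A))$, and $V(W(A))$ is precisely the kernel of the natural projection $W(A)\to A$, $(a_0, a_1, \ldots)\mapsto a_0$; this gives $W(A)/pW(A)\simeq A$. Iterating the same step yields $p^n W(A)=V^n(W(A))$, consisting of Witt vectors whose first $n$ coordinates vanish, so the quotient $W(A)/p^n W(A)$ is naturally identified with the ring of truncated (length-$n$) Witt vectors. Since $W(A)$ is by definition the inverse limit of these truncations, we conclude $W(A)\simeq\varprojlim_n W(A)/p^n W(A)$, i.e.~classical $p$-completeness.

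The only nontrivial inputs are the identities $FV=VF=p$ together with the coordinate formula for $F$ in characteristic $p$; these are classical but require honest computation with Witt polynomials, so I would invoke them as a black box. Once they are in hand, everything reduces to the elementary observation that $F$ becomes a bijection of $W(A)$ precisely when $A$ is perfect, and the claimed structural properties of $W(A)$ follow formally.
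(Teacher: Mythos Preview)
Your argument is correct and is essentially the standard proof (and indeed the one given in the cited reference \cite[Lemma~3.3.2]{Prism}): once one knows that in characteristic $p$ the Frobenius $F$ acts coordinate-wise and that $FV=VF=p$, perfection of $A$ makes $F$ bijective on $W(A)$, from which $p$-torsion-freeness, $pW(A)=VW(A)$, and $p^nW(A)=V^nW(A)$ all follow as you describe.

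Note, however, that the paper does not supply its own proof of this lemma at all; it simply records the statement with a citation to \cite{Prism}. So there is no ``paper's proof'' to compare against---your write-up fills in exactly what the reference contains.
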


\cref{lem:strict p} and the deformation theory (the vanishing of the cotangent complex $L_{(-)/\F_p}$) gives an interesting relationship between the theory of characteristic $p$ and the theory of ($p$-complete) $\delta$-rings.

\begin{thm}[{\cite[Corollary 2.31]{BS22}}]\label{thm:Witt-mod p}
The following categories are equivalent:
\begin{enumerate}
    \item The category $\Perf^{\delta,\land}$ of $p$-complete perfect $\delta$-rings with ($p$-complete) $\delta$-ring maps.
    \item The category $\Perf_{\F_p}$ of perfect $\F_p$-algebras with $\F_p$-algebra maps.
\end{enumerate}
The functor from (1) to (2) is the modulo $p$ functor and the inverse functor is the Witt vector functor.
\end{thm}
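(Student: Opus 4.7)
The plan is to exhibit the two functors $(-)/p:\Perf^{\delta,\land}\to\Perf_{\F_p}$ and $W(-):\Perf_{\F_p}\to\Perf^{\delta,\land}$ as mutually quasi-inverse. First I would check that both are well-defined. The mod-$p$ direction is essentially immediate: the Frobenius lift on a perfect $\delta$-ring $A$ is an isomorphism by hypothesis, and reduces modulo $p$ to the Frobenius on $A/p$, which must therefore also be an isomorphism. For the other direction, \cref{lem:strict p} already supplies $p$-completeness, $p$-torsion-freeness, and $W(R)/p\simeq R$; what remains is to show $W(R)$ is perfect. I would argue that the Frobenius lift $\varphi$ on $W(R)$ reduces mod $p$ to the Frobenius on $R$, which is an isomorphism, and then run the standard d\'evissage for maps of $p$-torsion-free $p$-complete rings: using $p$-torsion-freeness to identify $p^n W(R)/p^{n+1} W(R)\simeq W(R)/p$, an inductive argument shows $\varphi$ is an isomorphism modulo every $p^n$, and $p$-completeness promotes this to an honest isomorphism.

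One of the two natural isomorphisms is free: $W(R)/p\simeq R$ is exactly \cref{lem:strict p}. For the other, given $A\in\Perf^{\delta,\land}$, I would apply the adjunction (forgetful)$\dashv W$ to the quotient ring map $A\to A/p$ to produce a canonical $\delta$-ring map $\eta_A:A\to W(A/p)$. The unit--counit identity forces the composite $A\xrightarrow{\eta_A} W(A/p)\twoheadrightarrow W(A/p)/p\simeq A/p$ to coincide with the original quotient, so $\eta_A$ reduces modulo $p$ to the identity on $A/p$. Now both $A$ (being $p$-torsion-free via \cref{rmk:some analogue of char p}(1) applied to the injective Frobenius lift, and $p$-complete by hypothesis) and $W(A/p)$ (by \cref{lem:strict p}) satisfy the hypotheses of the same d\'evissage used above, so the mod-$p$ identity upgrades to an isomorphism of $p$-complete $\delta$-rings. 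Naturality in both $A$ and $R$ is then formal from the adjunction.

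The main obstacle I anticipate is the bookkeeping around the adjunction, namely verifying that the abstractly constructed $\eta_A$ really reduces modulo $p$ to the identity. This hinges on identifying the counit $W(R)\to R$ of the (forgetful)$\dashv W$ adjunction with the quotient $W(R)\twoheadrightarrow W(R)/p\simeq R$ when $R$ is a perfect $\F_p$-algebra, which is essentially the content of \cref{lem:strict p} combined with the universal property of Witt vectors. Once this compatibility is pinned down, the d\'evissage and the rest of the argument are routine.
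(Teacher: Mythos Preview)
Your argument is correct. The d\'evissage you describe works exactly as stated: once you know that a map of $p$-torsion-free $p$-complete rings is an isomorphism modulo~$p$, the short exact sequences $0\to A/p\xrightarrow{p^n} A/p^{n+1}\to A/p^n\to 0$ and the five lemma give isomorphisms modulo every $p^n$, and $p$-completeness finishes the job. The obstacle you flag is not serious: the counit $W(R)\to R$ is the zeroth ghost (equivalently, Witt) component, and for perfect $R$ over $\F_p$ this is exactly the quotient $W(R)\twoheadrightarrow W(R)/p$ under the identification of \cref{lem:strict p}; so your $\eta_A$ really does reduce to the identity.

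That said, the paper does not actually supply a proof of this theorem---it simply cites \cite[Corollary~2.31]{BS22}. The only indication of method is the sentence immediately preceding the statement, which points to \emph{deformation theory} and the vanishing of the cotangent complex $L_{(-)/\F_p}$ for perfect $\F_p$-algebras. That approach argues that, because $L_{R/\F_p}\simeq 0$, the perfect $\F_p$-algebra $R$ admits a unique flat lift to each $\Z/p^n$, namely $W_n(R)$, and then passes to the limit. Your route is genuinely different: it bypasses the cotangent complex entirely, trading the deformation-theoretic uniqueness statement for a hands-on use of the (forgetful)$\dashv W$ adjunction to \emph{produce} the comparison map, followed by an elementary d\'evissage to check it is an isomorphism. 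The deformation-theoretic argument is more conceptual and explains \emph{why} the lift is unique without ever naming a candidate; your argument is more elementary, self-contained within the paper's toolkit, and avoids any derived input.
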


Moreover, we can define (co)perfection for $\delta$-rings, which is an analogue of Frobenius-(co)perfection (\cref{dfn:(co)perfection}).

\begin{dfn}[(completed) $\delta$-(co)perfection]\label{dfn: delta-(co)perfection}
Let $A$ be a $p$-complete $\delta$-ring with a Frobenius lift $\varphi$. We define the following notions:
\begin{enumerate}
    \item The \emph{(completed) $\delta$-coperfection} $A_\text{perf}$ of $A$ is defined the $p$-completion of the sequential colimit $\ilim(A\xrightarrow{\varphi}A\xrightarrow{\varphi}\cdots)$.
    \item The \emph{(completed) $\delta$-perfection} $A^\text{perf}$ of $A$ is defined by the sequential limit $\plim(\cdots\xrightarrow{\varphi}A\xrightarrow{\varphi}A)$.
\end{enumerate}
\exend
\end{dfn}

\begin{pro}[(completed) $\delta$-(co)perfection adjunction]
The inclusion $\Perf^{\delta,\land}\subset\Ring^{\delta,\land}$ admits a left (resp. right) adjoint functor $(-)_\text{perf}$ (resp. $(-)^\text{perf}$) where $\Ring^{\delta,\land}$ denotes the category of $p$-complete $\delta$-rings with $p$-continuous $\delta$-ring maps.
\end{pro}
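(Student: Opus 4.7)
The plan is to check, for each candidate functor, that (a) the output lands in $\Perf^{\delta,\land}$, and (b) it satisfies the universal property against an arbitrary $B\in\Perf^{\delta,\land}$. Both statements reduce to the universal properties of the colimit/limit diagrams combined with the $p$-completion adjunction.

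For the coperfection: since the Frobenius lift $\varphi$ is itself a $\delta$-ring endomorphism (it commutes with $\delta$, cf.\ \cite{BS22}), the sequential diagram $A\xrightarrow{\varphi}A\xrightarrow{\varphi}\cdots$ lives in $\Ring^\delta$. Because colimits in $\Ring^\delta$ are computed at the level of underlying rings (as noted earlier in the excerpt), the colimit $A_{\text{perf}}^{\circ}:=\ilim_n A$ inherits a canonical $\delta$-structure; its Frobenius lift is the shift, hence an isomorphism, so $A_{\text{perf}}^{\circ}$ is already perfect. I then $p$-complete, using that the $p$-completion of a $\delta$-ring inherits a natural $\delta$-structure, to land in $\Perf^{\delta,\land}$. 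For the universal property, given $B\in\Perf^{\delta,\land}$ and a $\delta$-ring map $f:A\to B$, the family $\{\varphi_B^{-n}\circ f\}_{n\geq 0}$ is a compatible cone on the tower, yielding a unique $\delta$-ring map $A_{\text{perf}}^{\circ}\to B$; since $B$ is $p$-complete, this extends uniquely to $A_\text{perf}$, and uniqueness is immediate from density of the image of $A_{\text{perf}}^{\circ}$ in $A_\text{perf}$.

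For the perfection, the argument is dual. Limits in $\Ring^\delta$ also reduce to ring limits, so $A^\text{perf}:=\plim_n A$ is a $\delta$-ring whose Frobenius lift is the shift isomorphism, hence perfect. A sequential limit of $p$-complete rings along $p$-continuous transition maps is $p$-complete, so $A^\text{perf}\in\Perf^{\delta,\land}$. For the universal property, a $\delta$-ring map $g:B\to A$ with $B\in\Perf^{\delta,\land}$ yields the compatible system $\{g\circ\varphi_B^{-n}\}_{n\geq 0}$ with the transition $\varphi$ in the defining diagram of $A^\text{perf}$, producing the unique desired map $B\to A^\text{perf}$.

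The main potential obstacle is the interaction with $p$-completion in the coperfection case: one must verify that the $\delta$-structure on $A_{\text{perf}}^{\circ}$ extends $p$-adically to its completion, and that the induced Frobenius on the completion remains bijective. The former follows from the $p$-adic continuity of $\delta$—the failure of additivity and multiplicativity of $\delta$ is controlled by terms divisible by $p$, cf.\ \cref{dfn:delta-ring}(2)(3)—so $\delta$ is uniformly continuous for the $p$-adic topology and extends to the completion. The latter is automatic since $\varphi$ is already a ring isomorphism on $A_{\text{perf}}^{\circ}$, and $p$-completion preserves ring isomorphisms. With these technicalities addressed, both adjunctions follow formally.
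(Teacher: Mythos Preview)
Your approach is the same as the paper's: form the sequential (co)limit along $\varphi$, note that the shift makes the Frobenius bijective so the result is perfect, then in the coperfection case $p$-complete and check that the $\delta$-structure survives. The paper is terser---it invokes \cite[Lemma~2.17]{BS22} for the extension of $\delta$ to the $p$-completion and records that the colimit is $p$-torsion-free (via injectivity of $\varphi$, cf.\ \cref{rmk:some analogue of char p})---while you spell out the universal properties explicitly via the cones $\{\varphi_B^{-n}\circ f\}$ and $\{g\circ\varphi_B^{-n}\}$.

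One correction, however: your stated reason for the $p$-adic continuity of $\delta$ does not work. The deviation terms in \cref{dfn:delta-ring}(2)(3) are \emph{not} divisible by $p$ in general; for instance when $p=2$ one has $\delta(x+y)-\delta(x)-\delta(y)=-xy$. The correct argument uses the identity $p\,\delta(x)=\varphi(x)-x^p$: on the $p$-torsion-free ring $A_{\text{perf}}^{\circ}$ this gives $\delta(p^nA_{\text{perf}}^{\circ})\subset p^{n-1}A_{\text{perf}}^{\circ}$ (since $\varphi$ is a ring map and $x\mapsto x^p$ raises $p$-adic valuation), which is exactly the uniform continuity needed to extend $\delta$ to the completion. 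This is what the paper's citation of \cite[Lemma~2.17]{BS22} is doing. With that fix your argument is complete.
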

\begin{proof}
For any $p$-complete $\delta$-ring $A\in\Ring^{\delta,\land}$, we let $A'$ denote the sequential colimit $\ilim(A\xrightarrow{\varphi}A\xrightarrow{\varphi}\cdots)$. Since $A'$ is $p$-torsion-free and $\varphi_{A'}$ is an isomorphism, so it is perfect $\delta$-ring. The ring $A''$ denotes a $p$-completion of $A$. Note that $A''$ has a unique $\delta$-ring structure compatible with the $\delta$-ring structure of $A$ by \cite[Lemma 2.17]{BS22}, and we have $A''\simeq W(A''/p)$.

Note that $p$-completion is a left adjoint functor of the inclusion $\Ring\subset\Ring^\land$, we obtain the desired functor. Also, the claim for the right adjoint follows from the similar argument. (cf. \cite[Remark 2.27 and Lemma 3.9]{BS22})
\end{proof}

\begin{dfn}[Stone $\delta$-rings]\label{dfn:Stone delta-rings}
We let $\Stone^{\delta,\land}$ denote the essential image of $W:\mathrm{Bool}_{\F_p}\to\Perf^{\delta,\land}$. An object of $\Stone^{\delta,\land}$ is called a \emph{Stone $\delta$-ring}.
\exend
\end{dfn}

We will examine the new class $\Stone^{\delta,\land}$ of $\delta$-rings as the analogue of the relationship between $\Stone_{\F_p}$ and $\Ring_{\F_p}$.

\begin{pro}
A $p$-complete $\delta$-ring is Stone if and only if its Frobenius lift is the identity map.
\end{pro}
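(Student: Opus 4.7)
The plan is to leverage the equivalence of categories from \cref{thm:Witt-mod p}, which identifies $\Perf^{\delta,\land}$ with $\Perf_{\F_p}$ via reduction modulo $p$ and its inverse $W(-)$. The key observation is that under this equivalence, the Frobenius lift on a $p$-complete perfect $\delta$-ring corresponds precisely to the ordinary Frobenius on its mod $p$ reduction (by the very definition of the Frobenius lift and the fact that both $W$ and $(-)/p$ are functorial in $\delta$-ring maps). Once this compatibility is in hand, the statement reduces to the observation that a perfect $\F_p$-algebra is $p$-Boolean (i.e.\ $a^p = a$ for all $a$) if and only if its Frobenius is the identity, which is tautological.

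In more detail, first I would treat the forward direction. Let $B \in \Stone^{\delta,\land}$, so by \cref{dfn:Stone delta-rings} there is a $p$-Boolean algebra $A$ with $B \simeq W(A)$. Since $A$ is $p$-Boolean, its Frobenius is the identity by \cref{dfn:p-Bool}. Applying $W(-)$, which is functorial and, restricted to $\Perf_{\F_p}$, inverse to $(-)/p$, the Frobenius lift $\varphi_B$ on $B = W(A)$ is identified with $W(\mathrm{Frob}_A) = W(\mathrm{id}_A) = \mathrm{id}_B$.

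For the converse, suppose $B \in \Ring^{\delta,\land}$ has $\varphi_B = \mathrm{id}_B$. Then $\varphi_B$ is an isomorphism, so $B$ is perfect. By \cref{thm:Witt-mod p} we have $B \simeq W(B/p)$, and reducing the identity $\varphi_B = \mathrm{id}_B$ modulo $p$ yields $\mathrm{Frob}_{B/p} = \mathrm{id}_{B/p}$. This means every element of $B/p$ satisfies $a^p = a$, so $B/p \in \mathrm{Bool}_{\F_p}$, and hence $B \simeq W(B/p) \in \Stone^{\delta,\land}$ by \cref{dfn:Stone delta-rings}.

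The only nontrivial point — and what I expect to be the main thing to verify carefully — is the compatibility $\varphi_{W(A)} \leftrightarrow \mathrm{Frob}_A$ under the equivalence of \cref{thm:Witt-mod p}. This follows because the Frobenius lift on any $\delta$-ring reduces to the ordinary Frobenius modulo $p$ (directly from $\varphi(x) = x^p + p\delta(x)$ in \cref{dfn:delta-ring}), and the equivalence is given by $(-)/p$, so the natural endomorphism $\varphi$ on $\Perf^{\delta,\land}$ is transported to the natural endomorphism $\mathrm{Frob}$ on $\Perf_{\F_p}$. Apart from this bookkeeping the argument is purely formal.
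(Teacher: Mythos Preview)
Your proof is correct and follows essentially the same strategy as the paper: both directions pass through the equivalence of \cref{thm:Witt-mod p} and the tautology that $p$-Boolean means Frobenius is the identity. The only cosmetic difference is that in the forward direction the paper verifies $\varphi_{W(R)}=\mathrm{id}$ by an explicit Teichm\"{u}ller-expansion computation, whereas you deduce $\varphi_{W(A)}=W(\mathrm{Frob}_A)$ from functoriality of the equivalence; your route is cleaner but tacitly uses that $\varphi$ is a $\delta$-ring endomorphism (immediate here since $W(A)$ is $p$-torsion-free, so $\delta$-compatibility reduces to $\varphi\circ\varphi=\varphi\circ\varphi$).
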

\begin{proof}
If a $p$-complete $\delta$-ring $A$ is Stone, there exists a Stone $\F_p$-algebra $R$ such that $A$ is isomorphic to $W(R)$. So, we can describe the action of the Frobenius lift by using Teichm\"{u}ller expansions:
\[
\varphi\left(\sum_{n=0}^\infty [a_n]p^n\right)=\sum_{n=0}^\infty[a_n^p]p^n=\sum_{n=0}^\infty[a_n]p^n\quad\text{where each $a_n\in R$}.
\]
Consequentially, we obtain the desired property.

Conversely, for a given $p$-complete $\delta$-ring $A$ with the trivial Frobenius lift, the modulo $p$ reduction $A/p$ is a Stone $\F_p$-algebra. Note that $A$ has no $p$-torsion element by \cref{rmk:some analogue of char p}, the ring $W(A/p)\simeq A$ of Witt vectors is Stone.
\end{proof}

Next, we can define the following functors $\Ring^{\delta,\land}\to\Stone^{\delta,\land}$ as an analogue of the Frobenius-(co)invariant functor (\cref{dfn:(co)invariant}).

\begin{dfn}[(completed) $\delta$-(co)invariant]\label{dfn:delta-(co)invariant}
The right (resp. left) adjoint functor of the inclusion $\Stone^{\delta,\land}\subset\Ring^{\delta,\land}$ is denoted by $(-)^{\varphi=1}$ (resp. $(-)_{\varphi=1}^\land$), and called \emph{(completed) $\delta$-invariant} (resp. \emph{(completed) $\delta$-coinvariant}).
\exend
\end{dfn}

\begin{rmk}
Similarly as in \cref{lem:(co)invariant}, the (completed) $\delta$-(co)invariant functors are described explicitly:
\[
A^{\varphi=1}=\mathrm{ker}(\varphi-1:A\to A),\quad A_{\varphi=1}^\land=\mathrm{coker}(\varphi-1:A\to A)^\land,
\]
where $\varphi:A\to A$ denotes a Frobenius lift on $A$. (Note that only $\delta$-coinvariant functor is needed the $p$-completion.)
\exend
\end{rmk}

\begin{thm}[Stone duality 2]\label{thm:Stone duality 2}
There exists an equivalence of categories:
\[
(\Stone^{\delta,\land})^\text{op}\simeq\mathrm{ProFin}.
\]
\end{thm}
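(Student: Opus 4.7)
The plan is to obtain the desired equivalence as a composition of three equivalences already available in the text: the classical Stone duality of \cref{thm:Stone duality} for $k = \F_p$, the identification of Stone $\F_p$-algebras with $p$-Boolean algebras from \cref{pro:p-Bool}, and the Witt vector equivalence between perfect $\F_p$-algebras and $p$-complete perfect $\delta$-rings from \cref{thm:Witt-mod p}.

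First I would observe that every $p$-Boolean algebra is in particular perfect, since the Frobenius coincides with the identity by the defining relation $a^p = a$. Combined with \cref{pro:p-Bool}, this shows that $\Stone_{\F_p}$ is a full subcategory of $\Perf_{\F_p}$. Hence the Witt vector functor $W$ restricts to a functor $\Stone_{\F_p} \to \Perf^{\delta,\land}$, and by \cref{dfn:Stone delta-rings} its essential image is precisely $\Stone^{\delta,\land}$. Essential surjectivity of $W\colon \Stone_{\F_p} \to \Stone^{\delta,\land}$ is thus tautological, while full faithfulness is inherited from the equivalence $W\colon \Perf_{\F_p} \xrightarrow{\simeq} \Perf^{\delta,\land}$ of \cref{thm:Witt-mod p}. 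Therefore the restriction $W\colon \Stone_{\F_p} \xrightarrow{\simeq} \Stone^{\delta,\land}$ is an equivalence, whose inverse is given by modulo $p$ reduction.

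Composing with the opposite of \cref{thm:Stone duality} for $k = \F_p$ then produces the chain
\[
(\Stone^{\delta,\land})^\text{op} \xrightarrow{(-)/p} \Stone_{\F_p}^\text{op} \xrightarrow{\Hom_{\F_p}(-,\F_p)} \mathrm{ProFin},
\]
each arrow of which is an equivalence. An explicit quasi-inverse in the other direction is $S \longmapsto W(\mathrm{Cont}(S,\F_p))$, which one can also describe as $S \mapsto \mathrm{Cont}(S,\Z_p)$; verifying that these two descriptions agree amounts to checking that $W$ commutes with the filtered colimits presenting $\mathrm{Cont}(S,\F_p)$ as a Stone $\F_p$-algebra, which follows from the fact that on perfect $\F_p$-algebras $W$ agrees with the (strict) Witt vectors and is already known to preserve such colimits.

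There is no substantive obstacle; the content lies entirely in assembling the three equivalences coherently. The only point requiring a small sanity check is that the essential image definition of $\Stone^{\delta,\land}$ is compatible with the hom-sets used in \cref{thm:Witt-mod p}, i.e.\ that Stone $\delta$-rings are closed under $p$-complete $\delta$-ring maps among perfect $p$-complete $\delta$-rings, but this is immediate from the mod $p$ equivalence once one knows that $\Stone_{\F_p} \subset \Perf_{\F_p}$ is full.
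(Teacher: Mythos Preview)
Your proof is correct and follows essentially the same route as the paper: reduce to showing $\Stone_{\F_p}\simeq\Stone^{\delta,\land}$ via the restriction of the equivalence $W:\Perf_{\F_p}\xrightarrow{\simeq}\Perf^{\delta,\land}$ from \cref{thm:Witt-mod p}, then compose with the classical Stone duality of \cref{thm:Stone duality}. Your additional remark identifying the quasi-inverse with $S\mapsto\mathrm{Cont}(S,\Z_p)$ anticipates exactly what the paper establishes separately in \cref{lem:Stone duality} and \cref{rmk:Stone duality}.
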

\begin{proof}
By \cref{thm:Stone duality}, it suffices to show $\Stone_{\F_p}\simeq\Stone^{\delta,\land}$. Indeed, the desired equivalence can be realized by the functors:
\[
\Phi:\Stone^{\delta,\land}\hookrightarrow\Perf^{\delta,\land}\xrightarrow{\mod p}\Perf_{\F_p},\quad\Psi:\Stone_{\F_p}\xrightarrow{\text{forgetful}}\Perf_{\F_p}\xrightarrow{W}\Perf^{\delta,\land}.
\]
Note that $\Phi$ and $\Psi$ are well-defined by \cref{dfn:Stone delta-rings}, and $\mod p$ and $W$ give an equivalence at the level of $p$-complete perfect $\delta$-rings by \cref{thm:Witt-mod p}, so the functors above gives the desired equivalence.
\end{proof}

\begin{lem}\label{lem:Stone duality}
Let $S$ be a profinite set. There exists an isomorphism:
\[
W(\mathrm{Cont}(S,\F_p))\simeq\mathrm{LocConst}(S,\Z_p)^\land\simeq\mathrm{Cont}(S,\Z_p).
\]
\end{lem}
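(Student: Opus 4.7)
The plan is to exploit the profinite structure $S=\varprojlim_i S_i$ with each $S_i$ finite, express both sides as filtered colimits (followed by $p$-completion) indexed by the cofinal system $(S_i)$, and verify they agree.

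First I would write $\mathrm{Cont}(S,\F_p)=\varinjlim_i \F_p^{S_i}$, which is a filtered colimit of perfect $\F_p$-algebras inside $\Perf_{\F_p}$. Under the equivalence $W:\Perf_{\F_p}\xrightarrow{\sim}\Perf^{\delta,\land}$ of \cref{thm:Witt-mod p}, filtered colimits in $\Perf^{\delta,\land}$ are the $p$-completions of the corresponding filtered colimits in $\Ring^\delta$ (this is forced by the fact that the inverse functor $\mod p$ must send the colimit back to $\varinjlim_i \F_p^{S_i}$). Since $W$ is a right adjoint, it preserves finite products, so $W(\F_p^{S_i})=\Z_p^{S_i}=\mathrm{Cont}(S_i,\Z_p)$, and hence
\[
W\bigl(\mathrm{Cont}(S,\F_p)\bigr)\simeq\Bigl(\varinjlim_i \mathrm{Cont}(S_i,\Z_p)\Bigr)^{\land}.
\]

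Next I would identify the uncompleted colimit with $\mathrm{LocConst}(S,\Z_p)$: every locally constant function $S\to\Z_p$ has finite image by compactness of $S$ and factors through some $S_i$ by the universal property of $S=\varprojlim S_i$; conversely, every map to some $\mathrm{Cont}(S_i,\Z_p)$ in the colimit pulls back along the projection $S\to S_i$ to a locally constant function on $S$. This establishes the first isomorphism $W(\mathrm{Cont}(S,\F_p))\simeq\mathrm{LocConst}(S,\Z_p)^{\land}$.

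For the second isomorphism, I would show $\mathrm{LocConst}(S,\Z_p)/p^n\simeq\mathrm{LocConst}(S,\Z/p^n)=\mathrm{Cont}(S,\Z/p^n)$ for each $n$: reduction mod $p^n$ is clearly injective on locally constant functions considered modulo $p^n$, and surjective because one can lift any $f:S\to\Z/p^n$ (locally constant, hence of finite image) to a locally constant $\tilde f:S\to\Z_p$ via any set-theoretic section of $\Z_p\twoheadrightarrow\Z/p^n$ applied to the finite image of $f$. Taking the inverse limit over $n$ then gives
\[
\mathrm{LocConst}(S,\Z_p)^{\land}=\varprojlim_n \mathrm{Cont}(S,\Z/p^n)\simeq\mathrm{Cont}(S,\Z_p),
\]
the last step being the standard description of continuous maps into a $p$-adically complete ring as compatible systems of mod-$p^n$ reductions.

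The main subtlety is the first step: the Witt vector functor $W$ is only a right adjoint on $\Ring$ and so does not preserve colimits in general, but restricted to perfect $\F_p$-algebras it becomes an equivalence with $\Perf^{\delta,\land}$ and therefore does preserve filtered colimits, provided one interprets the colimit on the $\delta$-ring side as $p$-completed. Once this point is digested, both isomorphisms reduce to transparent computations on the finite quotients $S_i$.
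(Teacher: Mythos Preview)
Your argument is correct, but it takes a different route from the paper. The paper does not pass through the profinite presentation $S=\varprojlim_i S_i$ at all; instead it observes that each of the three rings is $p$-torsion-free and $p$-complete with mod-$p$ reduction equal to $\mathrm{Cont}(S,\F_p)$ (the identification $\mathrm{Cont}(S,\Z_p)/p\simeq\mathrm{Cont}(S,\F_p)$ being checked pointwise), and then invokes \cref{thm:Witt-mod p} to conclude that any such ring is canonically $W(\mathrm{Cont}(S,\F_p))$. In other words, the paper uses the uniqueness part of the Witt-vector equivalence, while you use the colimit-preservation part. Your approach is more explicit---it actually exhibits the isomorphisms as completed colimits over the finite quotients $S_i$, and it makes the intermediate object $\mathrm{LocConst}(S,\Z_p)$ appear naturally as the uncompleted colimit---whereas the paper's approach is shorter and avoids having to track how $W$ interacts with filtered colimits. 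Both are perfectly valid; your version has the advantage of being self-contained about \emph{why} the $p$-completion appears in the middle term.
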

\begin{proof}
We have the following isomorphism:
\[
W(\mathrm{Cont}(S,\F_p))/p\simeq\mathrm{Cont}(S,\F_p)\simeq\mathrm{Cont}(S,\Z_p)/p\simeq\mathrm{LocConst}(S,\Z_p)/p.
\]
(The second isomorphism can be checked pointwise.)

Note that both sides are $p$-torsion-free, so we obtain the desired isomorphism from \cref{thm:Witt-mod p}.
\end{proof}

\begin{rmk}\label{rmk:Stone duality}
By comparing isomorphisms of the "original" Stone duality (\cref{thm:Stone duality}), we can also realize the following explicit equivalence:
\[
(\Stone^{\delta,\land})^\text{op}\ni A\longmapsto\Hom_\delta^\land(A,\Z_p)\in\mathrm{ProFin},\quad\mathrm{ProFin}\ni S\longmapsto\mathrm{Cont}(S,\Z_p)\in (\Stone^{\delta,\land})^\text{op},
\]
where $\Hom^{\delta,\land}(A,\Z_p)$ denotes the (profinite) set of $p$-completely $\delta$-maps from $A$ to $\Z_p$.
\exend
\end{rmk}

\begin{rmk}
The following diagram represents the relationship discussed above:
\[
\xymatrix{
{\mathrm{ProFin}^\text{op}}\ar@<-1ex>[dd]_-{\mathrm{Cont}(-,\F_p)}^-{\simeq}\ar@<-8ex>@/_70pt/[dddd]_-{\mathrm{Cont}(-,\Z_p)}^-{\simeq}&&&& \\
&&&& \\
{\Stone_{\F_p}}\ar[rr]^-{\bot}_-{\bot}\ar@<-1ex>[dd]_-{W}^-{\simeq}\ar@<-1ex>[uu]_-{\Hom_{\F_p}(-,\F_p)}&&{\Perf_{\F_p}}\ar@<-1ex>[dd]_-{W}^-{\simeq}\ar@/^12pt/[ll]^-{\text{invariant}}\ar@/_12pt/[ll]_-{\text{coinvariant}}\ar@{^{(}->}[rr]^-{\bot}_-{\bot}&&{\Ring_{\F_p}}\ar@/^12pt/[ll]^-{\text{perfection}}\ar@/_12pt/[ll]_-{\text{coperfection}} \\
&&&&\\
{\Stone^{\delta,\land}}\ar@<6ex>@/^70pt/[uuuu]_-{\Hom^{\delta,\land}(-,\Z_p)}\ar@{^{(}->}[rr]^-{\bot}_-{\bot}\ar@<-1ex>[uu]_-{\text{modulo $p$}}&&{\Perf^{\delta,\land}}\ar@<-1ex>[uu]_-{\text{modulo $p$}}\ar@/^12pt/[ll]^-{\text{$\delta$-invariant}}\ar@/_12pt/[ll]_-{\text{$\delta$-coinvarint}}\ar@{^{(}->}[rr]^-{\bot}_-{\bot}&&{\Ring^{\delta,\land}.}\ar@/^12pt/[ll]^-{\text{$\delta$-perfection}}\ar@/_12pt/[ll]_-{\text{$\delta$-coperfection}}
}
\]
\exend
\end{rmk}

\section{Light Condensed Mathematics}

In this section, we will review the notions of light profinite sets and discuss the definition and properties of light condensed sets. 

\subsection{Light Profinite Sets}

We collect the well-known results for topological spaces to define the notion of "light" profinite sets and compact Hausdorff spaces.

\begin{thm}[Urysohn's metrization theorem]\label{thm:Urysohn}
Let $K$ be a compact Hausdorff space. The following are equivalent.
\begin{enumerate}
    \item The space $K$ is metrizable.
    \item The space $K$ is second countable.
    \item There exists an embedding $K\hookrightarrow[0,1]^\mathbb{N}$ into the Hilbert cube.
\end{enumerate}
\end{thm}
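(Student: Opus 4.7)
The plan is to prove the cycle of implications $(1) \Rightarrow (2) \Rightarrow (3) \Rightarrow (1)$, recovering the classical Urysohn metrization theorem in the compact Hausdorff setting.

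For $(1) \Rightarrow (2)$, I would use total boundedness: since $K$ is compact and metric, for each $n \geq 1$ it is covered by finitely many open balls of radius $1/n$, and the union of their centers over all $n$ provides a countable dense subset $D \subset K$. The collection of open balls of rational radius centered at points of $D$ is then a countable basis for the metric topology.

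For $(3) \Rightarrow (1)$, I would simply exhibit an explicit metric on the Hilbert cube, for instance
\[
d(x,y) = \sum_{n=1}^{\infty} 2^{-n} |x_n - y_n|,
\]
which is easily checked to induce the product topology. A subspace of a metrizable space is metrizable, so $K$ inherits a metric through the embedding $K \hookrightarrow [0,1]^\mathbb{N}$.

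The main content is $(2) \Rightarrow (3)$, the Urysohn embedding. I would start from a countable basis $\{U_n\}_{n \in \mathbb{N}}$ of $K$ and use that compact Hausdorff spaces are normal, so Urysohn's lemma applies: for each pair $(n,m)$ with $\overline{U_n} \subset U_m$, I choose a continuous function $f_{n,m} \colon K \to [0,1]$ that vanishes on $\overline{U_n}$ and equals $1$ on $K \setminus U_m$. Enumerating these countably many functions, I would form the continuous map
\[
\iota \colon K \longrightarrow [0,1]^{\mathbb{N}}, \qquad x \longmapsto (f_{n,m}(x))_{(n,m)}.
\]
The key point is that $\iota$ is injective and in fact a topological embedding. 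Given $x \in K$ and a closed $C \subset K$ with $x \notin C$, normality together with second countability allows me to sandwich $x \in U_n \subset \overline{U_n} \subset U_m \subset K \setminus C$ for some basic opens $U_n, U_m$; then $f_{n,m}(x)=0$ while $f_{n,m}|_C = 1$, so $\iota$ separates points from closed sets. This proves $\iota$ is a continuous injection from a compact space to a Hausdorff space, hence a closed embedding.

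The main (modest) obstacle is the sandwich step above, which is where second countability and normality interact: given the basic neighborhood of $x$ obtained from regularity, one must replace it by a smaller basic open before applying normality again, and this uses that the basis is countable only insofar as one then has countably many pairs $(n,m)$ to work with. Once this is in place, everything else is formal.
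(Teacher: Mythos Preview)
Your argument is correct and is the standard textbook proof of the Urysohn metrization theorem specialized to compact Hausdorff spaces. The cycle $(1)\Rightarrow(2)\Rightarrow(3)\Rightarrow(1)$ is the right organization, the use of total boundedness for $(1)\Rightarrow(2)$ is clean, the explicit metric on $[0,1]^{\mathbb{N}}$ handles $(3)\Rightarrow(1)$, and for $(2)\Rightarrow(3)$ your construction of the separating family $\{f_{n,m}\}$ via Urysohn's lemma and the pairs $\overline{U_n}\subset U_m$ is exactly the classical Urysohn embedding argument. One small remark: once you have shown that $\iota$ separates points from closed sets you already know it is an embedding (this is the initial-map criterion), so the final appeal to ``continuous injection from compact to Hausdorff'' is a second, independent reason---either suffices.

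The paper, however, gives no proof at all: \cref{thm:Urysohn} is stated as a classical fact and used as a black box to define light compact Hausdorff spaces. So there is nothing to compare your approach against; you have supplied a complete proof where the paper simply quotes the result.
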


In terms of profinite sets, we have more characterization of matrizableness. 

\begin{lem}[{\cite[Proposition 2.1.2]{Solid}}]\label{lem:light}
Let $S$ be a profinite set. The following are equivalent.
\begin{enumerate}
    \item The space $S$ is metrizable.
    \item The space $S$ is second countable.
    \item The space $S$ can be written as a sequential limit of finite sets.
    \item The set $\mathrm{Cont}(S,\Z)$ is countable.
    \item The set $\mathrm{Cont}(S,\F_2)$ is countable.
\end{enumerate}
\end{lem}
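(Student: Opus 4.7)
The plan is to invoke Urysohn's metrization theorem (\cref{thm:Urysohn}), which gives $(1)\Leftrightarrow(2)$ for free since every profinite set is compact Hausdorff, and then to close the loop by proving the chain $(2)\Rightarrow(3)\Rightarrow(4)\Rightarrow(5)\Rightarrow(2)$.

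The routine implications are $(3)\Rightarrow(4)\Rightarrow(5)\Rightarrow(2)$. For $(3)\Rightarrow(4)$, if $S\simeq\varprojlim_n S_n$ with each $S_n$ finite and discrete, any continuous map $f\colon S\to\Z$ has finite image by compactness of $S$ and discreteness of $\Z$; it is then a standard fact that such an $f$ factors through the projection $S\to S_n$ for some $n$. Hence $\mathrm{Cont}(S,\Z)=\varinjlim_n\Z^{S_n}$ is a countable union of countable sets. For $(4)\Rightarrow(5)$, the set-level inclusion $\{0,1\}\subset\Z$ induces an injection $\mathrm{Cont}(S,\F_2)\hookrightarrow\mathrm{Cont}(S,\Z)$. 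For $(5)\Rightarrow(2)$, continuous maps $S\to\F_2$ are in bijection with clopen subsets of $S$ via characteristic functions, and since $S$ is profinite (hence Stone), the clopens form a basis of the topology; countability of $\mathrm{Cont}(S,\F_2)$ therefore yields a countable basis.

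The main obstacle is $(2)\Rightarrow(3)$, where one must upgrade second countability to a genuinely sequential presentation by finite sets. I would start from the canonical cofiltered expression $S=\varprojlim_{i\in I}S_i$ over all finite quotients of $S$. Choosing a countable basis and refining it to countably many clopens $\{U_n\}_{n\in\mathbb{N}}$ (using that clopens form a basis of $S$), I would let $\mathcal{A}_n$ be the finite Boolean subalgebra generated by $U_1,\ldots,U_n$ and let $S_n$ be the finite quotient of $S$ corresponding to the associated clopen partition. The resulting surjective tower $\cdots\to S_2\to S_1$ is cofinal in $I$ because every continuous map $S\to T$ with $T$ finite is determined by finitely many clopens and hence factors through some $S_n$; passing to the limit then recovers $S\simeq\varprojlim_n S_n$. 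The delicate point in this final step is verifying cofinality cleanly, and this is precisely the part I would take the most care with.
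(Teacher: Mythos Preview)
Your argument is correct and follows the standard route: Urysohn gives $(1)\Leftrightarrow(2)$, and your chain $(2)\Rightarrow(3)\Rightarrow(4)\Rightarrow(5)\Rightarrow(2)$ closes the loop, with the cofinality step in $(2)\Rightarrow(3)$ handled properly via the countable clopen basis and the finite Boolean subalgebras it generates. Note, however, that the paper does not actually supply its own proof of this lemma---it is stated with a citation to \cite[Proposition 2.1.2]{Solid} and left unproven in the text---so there is no in-paper argument to compare against; your write-up is a faithful reconstruction of the standard proof one finds in that reference.
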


\begin{dfn}[{\cite[Definition 2.1.3]{Solid}}, light profinite set]
A profinite set is \emph{light} if it satisfies the equivalent conditions of \cref{lem:light}. We let $\mathrm{ProFin}^\text{light}$ denote the category of light profinite sets.

Similarly, a compact Hausdorff space is \emph{light} if it satisfies the equivalent conditions of \cref{thm:Urysohn}. We let $\mathrm{CHaus}^\text{light}$ denote the category of light compact Hausdorff spaces.
\exend
\end{dfn}

\begin{exa}[{\cite[Example 2.1.8]{Solid}}]\label{exa:sequence and Cantor set}
Here are important examples of light profinite sets.
\begin{enumerate}
    \item $\tilde{\mathbb{N}}:=\mathbb{N}\cup\{\infty\}=\plim[n]\{1,2,\ldots,n,\infty\}$ is light. For $n\leq m$, the map $\{1,2,\ldots,m,\infty\}\to\{1,2,\ldots,n,\infty\}$ in the system is given by sending $i$ to $i$ for $i\leq n$, and to $\infty$ otherwise.
    
    For a first-countable topological space $X$ and $x\in X$, a continuous map $\tilde{\mathbb{N}}\to X$ can be regarded as a sequence $x_1,x_2,\ldots$ that “converges” to  $x\in X$ by the first countability of $X$.
    \item The Cantor set $\{0,1\}^\mathbb{N}=\plim[S\subset\mathbb{N}]\{0,1\}^S$ is light. Note that the Cantor set is not countable.
    
    For a profinite set, it is light if and only if there is a surjection from the Cantor set (\cite[Proposition 2.19]{Solid}).
\end{enumerate}
\exend
\end{exa}

There is also a connection with Stone $\F_p$-algebras.

\begin{rmk}[{\cite[Proposition 4.2]{Gre24}}, Stone duality in light setting]\label{rmk:light Stone duality}

The functor as in \cref{thm:Stone duality} “preserves lightness”. That is, there exists the following equivalence of categories:
\[
\Stone_{\F_p,\aleph_1}\simeq(\mathrm{ProFin}^\text{light})^\text{op}.
\]
Here, $\aleph_1$ is the successor of the countable cardinal $\aleph_0$, and $\Stone_{\F_p,\aleph_1}$ denotes a full subcategory of $\aleph_1$-small Stone $\F_p$-algebras; equivalently, the category of countable Stone $\F_p$-algebras.
\end{rmk}

\begin{rmk}[light profinite covers]\label{rmk:light profinite cover}
For a light profnite set $S\in\mathrm{ProFin}^\text{light}$, the Stone-\v{C}ech compactification $\beta(S)$ is not necessarily light. 

Similarly, a light compact Housdorff space $K$ admits a "profinite cover" by taking the Stone-\v{C}ech compactification $\beta(K)$. However, we cannot take the cover as a "light profinite cover".

Indeed, we can take the following another cover as a "light profinite cover" (see also the proof of \cite[Proposition 2.2.8]{Solid}):

For a countable basis $\mathcal{B}$ of $K$ (we fix it), the index set $I$ of finite covers of $K$ in $\mathcal{B}$ is a countable cofiltered set. For any $i\in I$, if we let $S_i$ denote $\{U_{i,j_1},\ldots,U_{i,j_i}\}$, then we obtain the following surjection:
\[
f:S(=\plim[i\in I]S_i)\twoheadrightarrow K\ ;\ u=\{U_{i,k_i}\}_{i\in I}\longmapsto\bigcap_{i\in I}U_{i,k_i}.
\]
In fact, the set $f(u)$ is necessarily a point. If we pick two different points $x,y\in \bigcap_{i\in I}U_{i,k_i}$, then we take two closed neighborhoods $Z$ of $x$ and $Z'$ of $y$ with no intersection by Hausdorffness, which give a cover $\{U:=K\setminus Z, U:=K\setminus Z'\}$ of $K$. By definition, there exists a cover $S_{i_0}$ such that it is a refinement of $\{U,U'\}$. Since we have $x\in\bigcap_{i\in I}U_{i,k_i}$, $U_{i_0,k_{i_0}}\subset U'$, thus $y$ does not lie in $U_{i_0,j_{i_0}}$, which gives contradiction. Also, we can show non-emptiness of $\bigcap_{i\in I}U_{i,k_i}$ by the finite intersection property.
\exend
\end{rmk}

\begin{pro}[{\cite[Proposition 2.1.4]{Solid}}]\label{pro:replete}
The category $\mathrm{ProFin}^\text{light}$ admits countable limits. Moreover, a sequential limit of surjections is a surjection. (i.e. for any light profinite set $S=\plim[n]S_n$ with surjective transition maps, each projection $S\to S_n$ is a surjection.)
\end{pro}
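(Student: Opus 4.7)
The plan is to reduce both parts of the proposition to the metrizability characterization of lightness given by \cref{lem:light}, combined with a straightforward inductive construction for the surjectivity claim.

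For the existence of countable limits, the category $\mathrm{ProFin}$ admits all small limits (realized as closed subspaces of products of the underlying profinite sets), so any countable diagram $(S_i)_{i\in I}$ of light profinite sets admits a limit $L=\plim[i\in I]S_i$ in $\mathrm{ProFin}$. It remains to check that $L$ is light, which by \cref{lem:light} is equivalent to metrizability. Each $S_i$ is metrizable; a countable product of metrizable spaces is metrizable via the standard product metric $d((x_i),(y_i))=\sum_i 2^{-i}\min(1,d_i(x_i,y_i))$; and closed subspaces of metrizable spaces are metrizable. Since $L$ embeds as a closed subspace of $\prod_{i\in I}S_i$, it is metrizable, hence light.

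For the second claim, I would fix $n$ and $s_n\in S_n$ and exhibit an element of $S$ projecting to $s_n$. Set $s_k\in S_k$ for $k\leq n$ to be the image of $s_n$ along the composite transition map $S_n\to S_k$. For $k>n$, use the surjectivity of each transition map $S_k\to S_{k-1}$ to inductively choose $s_k\in S_k$ with image $s_{k-1}$. The resulting compatible family $(s_k)_k$ defines an element of $S=\plim[k]S_k$ whose projection to $S_n$ is $s_n$, so the projection $S\to S_n$ is surjective. One can avoid the appeal to countable dependent choice by a finite-intersection-property argument on the decreasing family of non-empty compact fibers inside $S$.

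Neither step is particularly delicate. The proposition is essentially a formal consequence of the way lightness is set up: the countability of the indexing diagram in the first part matches precisely the countability built into the definition of a light profinite set, so that countable limits stay inside the subcategory. The only subtlety worth flagging is the implicit use of countable choice in the second part, which can be eliminated by the compactness reformulation mentioned above.
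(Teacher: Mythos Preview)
Your argument is correct and is the standard one. Note, however, that the paper does not supply its own proof of \cref{pro:replete}: the proposition is stated with a citation to \cite[Proposition 2.1.4]{Solid} and no argument is given in the text, so there is nothing in the paper to compare against. Your reduction of the first part to the metrizability criterion of \cref{lem:light}, together with the closure of metrizability under countable products and closed subspaces, is exactly how one expects this to go; the inductive lift (or, equivalently, the compactness/finite-intersection-property variant) for the second part is likewise the canonical argument.
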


\begin{rmk}[{\cite{Gle58}}, extremally disconnected]
In particular, a finite product of light profinite sets is light again. However, a finite product of extremally disconnected spaces (i.e. projective objects in $\mathrm{CHaus}$) is not necessarily extremally disconnected. 
\exend
\end{rmk}

\subsection{Light Condensed Sets}

Next, we will define light condensed mathematics, in particular, light condensed sets.

\begin{dfn}[{\cite[Definition 2.2.1]{Solid}}, light condensed mathematics]
Let $C$ be a category with filtered colimits. The \emph{category of light condensed objects} of $C$ is the following category (topos):
\[
\Cond(C)^\text{light}:=\mathrm{Shv}_\text{eff}(\mathrm{ProFin}^\text{light},C).
\]
Here, $\mathrm{ProFin}^\text{light}$ is endowed with the finitary Grothendieck topology by finite jointly surjective covers (,noting that the notion of \emph{eff}ective epimorphisms is identified with the notion of surjetions.)
\exend
\end{dfn}

\begin{nota}
We abbreviate $\Cond(\mathrm{Set})^\text{light}$ as $\Cond$.
\end{nota}

\begin{rmk}
In light setting, we do not have to worry about set-theorical issue for $\mathrm{ProFin}$, however the light setting contains some practical setting. So, "it makes our discussion light".
\exend
\end{rmk}

\begin{exa}[{\cite[Example 2.2.3]{Solid}}, condensification]\label{exa:condensification}
Let $T$ be a topological space. The functor
\[
\underline{T}:\mathrm{ProFin}^\text{light}\ni S\longmapsto\mathrm{Cont}(S,T)\in\mathrm{Set}
\]
is a light condensed set since a surjection from a compact Hausdorff space is a quotient map.

This construction defines a functor $\mathrm{Top}\to\Cond$, which is called \emph{condensification}. From now, we will abbreviate an image $\underline{T}$ of condensification as $T$ for any topological space if there is no confusion.
\exend
\end{exa}

\begin{pro}[{\cite[Proposition 2.2.4]{Solid}}]\label{pro:underlying topological space}
The condensification functor $\underline{(-)}:\mathrm{Top}\to\Cond$ admits a left adjoint functor $(-)(\ast)_\text{top}$, which can be given by $X(\ast)_\text{top}=\colim[\underline{S}\to X]S\quad\text{in}\ \mathrm{Top}$ ; equivalently, the following topology:
\begin{enumerate}
    \item The underlying set is $X(\ast)_\text{top}=X(\ast)$.
    \item The topology is that for any surjection $\pi:\underline{S}\twoheadrightarrow X$  from $\mathrm{ProFin}^\text{light}$, $\pi(\ast)^{-1}(U)\subset S$ is an open subset in $S$ if $U\subset X(\ast)$.
\end{enumerate}
\end{pro}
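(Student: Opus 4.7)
The plan is to define the candidate left adjoint $X(\ast)_\text{top}$ concretely as the set $X(\ast)$ equipped with the topology stated in (2), verify the adjunction by unwinding the Yoneda correspondence, and afterwards identify this space with the colimit in (1). I will use that $\Cond$ is generated under colimits by representables $\underline{S}$ for $S \in \mathrm{ProFin}^\text{light}$, that $\Hom_{\Cond}(\underline{S}, X) = X(S)$ by Yoneda, and that $\underline{S}(\ast) = S$. Concretely, I take the underlying set of $X(\ast)_\text{top}$ to be $X(\ast)$ and topologize it by the final topology with respect to all set maps $x(\ast) \colon S \to X(\ast)$ arising from morphisms $x \colon \underline{S} \to X$ in $\Cond$; here $x(\ast)$ sends $s \in S$ to the image of $x \in X(S)$ under $s^\ast \colon X(S) \to X(\ast)$. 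Restricting to surjective $x$ yields the same topology, since given any $x$ the disjoint union with a surjection from a light profinite set is again a surjection from a light profinite set; this recovers the description in (2).

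To verify the adjunction $\Hom_{\mathrm{Top}}(X(\ast)_\text{top}, T) \simeq \Hom_{\Cond}(X, \underline{T})$, take a continuous $f \colon X(\ast)_\text{top} \to T$ and define $\tilde{f} \colon X \to \underline{T}$ by sending $x \in X(S)$ to the composite $f \circ x(\ast) \colon S \to T$; this is continuous (and hence lies in $\underline{T}(S) = \mathrm{Cont}(S, T)$) precisely by the choice of topology on $X(\ast)_\text{top}$, and naturality in $S$ is immediate from the Yoneda definition. Conversely, given $\alpha \colon X \to \underline{T}$, the set map $\alpha_\ast \colon X(\ast) \to T$ is continuous because for every $x \colon \underline{S} \to X$ the composite $\alpha_\ast \circ x(\ast) \colon S \to T$ agrees with $\alpha_S(x) \in \mathrm{Cont}(S, T)$ by naturality of $\alpha$, so preimages of opens are open in every test space $S$. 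A short check, taking $S = \ast$, shows the two assignments are mutually inverse.

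For the colimit description in (1), the topology on $X(\ast)_\text{top}$ is by construction the final topology of the diagram $(\underline{S} \to X) \mapsto S$, which matches the definition of the colimit topology in $\mathrm{Top}$. At the level of underlying sets, every $y \in X(\ast)$ is hit by $\underline{\ast} \xrightarrow{y} X$, and any pair $(S, s)$ in the diagram is identified with $(\underline{\ast} \xrightarrow{y} X,\, \ast)$ via the morphism $\underline{\ast} \xrightarrow{s} \underline{S}$ over $X$, where $y = x(\ast)(s)$. Hence $\colim[\underline{S} \to X] S = X(\ast)$ in $\mathrm{Set}$, completing the identification in $\mathrm{Top}$.

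The main obstacle I anticipate is the bookkeeping in the adjunction step: one must confirm that $\tilde{f}$, defined level-wise on representables, is genuinely a natural transformation (compatible with every restriction map in $\mathrm{ProFin}^\text{light}$), and that passing to $\alpha_\ast$ loses no information. Both reduce to the fact that a natural transformation between sheaves on $\mathrm{ProFin}^\text{light}$ is determined by its values on representables, together with the Yoneda dictionary translating $x \in X(S)$ to the set map $x(\ast) \colon S \to X(\ast)$.
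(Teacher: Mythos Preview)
The paper does not supply its own proof of this proposition; it is stated with a citation to \cite[Proposition 2.2.4]{Solid} and left at that. Your argument is the standard one and is correct for the adjunction and for the colimit identification: the Yoneda bookkeeping you carry out (that $(X(g)(x))(\ast)=x(\ast)\circ g$ and that $\alpha_\ast\circ x(\ast)=\alpha_S(x)$) is exactly what is needed, and the underlying-set computation of the colimit via the terminal object $\ast$ is clean.

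One small caveat: your reduction to surjections (``given any $x$ the disjoint union with a surjection from a light profinite set is again a surjection from a light profinite set'') presupposes that at least one surjection $\underline{S'}\twoheadrightarrow X$ from a light profinite set exists. By \cref{pro:qc and qs} this is equivalent to $X$ being quasicompact; for a general light condensed set there need be no such surjection, in which case condition (2) as literally stated becomes vacuous and can disagree with the final topology over all maps $\underline{S}\to X$. This is a wrinkle in the formulation of (2) rather than a gap in your adjunction argument, which goes through unchanged if one reads (2) with ``all maps'' in place of ``surjections'' (or restricts to quasicompact $X$).
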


\begin{rmk}[replete topoi]\label{rmk:replete}
For any family $\{X_n\}_{n\in\mathbb{N}}$ of light condensed sets with surjective transition maps and compatible surjections $X_n\twoheadrightarrow Y$ in $\Cond$, the map $X:=\plim[n]X_n\to Y$ is a surjection.

In fact, it suffices to show that for any light profinite set $S$ and any element $y\in Y(S)$, there exists a surjection $T\twoheadrightarrow S$ from a light profinite set such that $y$ can lift to an element $x\in X(T)$. For the surjection $X_0\twoheadrightarrow Y$, there exists a surjection $T_0\twoheadrightarrow S$ from a light profinite set such that $y$ can lift to an element $x_0\in X_0(T_0)$. Inductively, we obtain the sequence $\{x_n\}_{n\in\mathbb{N}}$ and $\{T_n\}_{n\in\mathbb{N}}$, which gives the desired lift $x\in X(\plim[n]T_n)$ of $y$. (Note that $\plim[n]T_n\to S$ is a surjection by \cref{pro:replete})

The observation makes $\Cond$ replete in the sense of \cite[Definition 3.1.1]{BS15}. For example, the notion of replete gives the exactness of countable products in $\Cond$ by \cite[Proposition 3.19]{BS15}.
\exend
\end{rmk}

Finally, we examine some relationship between topological spaces and light condensed sets. This discussion is followed by \cite[Section 3.1]{Cas24} which discuss general topoi but not light condensed sets. 

\begin{dfn}[{\cite[Definition 3.1.15]{Cas24}} and {\cite[Definition 2.2.6 and Remark 2.2.7]{Solid}}, qc, qs and qcqs in (general) topoi]
We define the following notions:
\begin{enumerate}
    \item A light condensed set $X$ is \emph{quasicompact (qc)} if for any family $\{X_i\}_{i\in I}$ with a surjection $\bigsqcup_{i\in I}X_i\twoheadrightarrow X$, there exists a finite subset $J$ of $I$ such that $\bigsqcup_{j\in J}X_j\to X$  is a surjection.     
    \item A light condensed set $X$ is \emph{quasiseparated (qs)} if for any diagram $U\to X\leftarrow V$ from quasicompact objects, the fiber product $U\times_XV$ is quasicompact.
    \item A light condensed set $X$ is \emph{qcqs} if $X$ is quasicompact and quasiseparated.
\end{enumerate}
\exend
\end{dfn}

\begin{pro}[characterization of qc and qs in $\Cond$]\label{pro:qc and qs}
Let $X$ be a light condensed set. The following properties are satisfied:
\begin{enumerate}
    \item The object $X$ is qc if and only if there exists a surjection $S\twoheadrightarrow X$ from (Yoneda's image of) a light profinite set $S$.
    \item The object $X$ is qs if and only if for any diagram $S_1\to X\leftarrow S_2$ from light profinite sets, $S_1\times_XS_2\subset S_1\times S_2$ is a light profinite set.
\end{enumerate}
\end{pro}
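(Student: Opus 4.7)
The plan is to reduce both statements to the basic fact that every light profinite set $S$ is already qc when viewed as a representable in $\Cond$. To see this, given any surjection $\bigsqcup_i Y_i \to S$ in $\Cond$, the identity $\mathrm{id}_S \in S(S)$ must lift along some light profinite cover $T \twoheadrightarrow S$ to a section of the (sheafified) coproduct. Because $T$ is compact, any section of a coproduct sheaf over $T$ is encoded by a \emph{finite} clopen decomposition $T = T_1 \sqcup \cdots \sqcup T_n$ with each $T_k$ mapping into some $Y_{i_k}$, and this already yields a finite subfamily covering $S$.

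Part (1) then follows readily: for ``if'', pull back a cover $\bigsqcup_i Y_i \to X$ along $S \twoheadrightarrow X$ and apply qc-ness of $S$; for ``only if'', consider the tautological surjection $\bigsqcup_{(S,x)} S \to X$ indexed over all pairs $(S \in \mathrm{ProFin}^\text{light}, x \in X(S))$, extract a finite subfamily by qc-ness of $X$, and observe that a finite disjoint union of light profinite sets is again light profinite. For (2), the ``if'' direction is also formal: using (1), choose surjections $S_1 \twoheadrightarrow U$ and $S_2 \twoheadrightarrow V$; the induced map $S_1 \times_X S_2 \to U \times_X V$ is then a surjection (as a composite of pullbacks of surjections), and qc-ness of $U \times_X V$ follows from (1) applied to the assumed light profinite source.

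The main obstacle is the ``only if'' direction of (2). Given $S_1 \to X \leftarrow S_2$, the pullback $S_1 \times_X S_2 = (S_1 \times S_2) \times_{X \times X} X$ embeds as a subobject of $S_1 \times S_2$ since the diagonal of $X$ is a monomorphism; qs-ness of $X$ then guarantees it is qc, so by (1) there is a surjection $T \twoheadrightarrow S_1 \times_X S_2$ with $T$ light profinite. The composite $T \to S_1 \times S_2$ is a map of representables, hence by Yoneda a continuous map of light profinite topological spaces; its topological image $Z \subseteq S_1 \times S_2$ is closed by compactness--Hausdorffness, hence a light profinite subspace. The delicate step is then to argue that the factorization $T \twoheadrightarrow Z \hookrightarrow S_1 \times S_2$ is an (epi, mono)-factorization in $\Cond$---using that continuous surjections of light profinite sets are sheaf epimorphisms (since base change along any test object yields a light profinite cover) and that closed inclusions are sheaf monomorphisms---so that the uniqueness of (epi, mono)-factorizations in a topos identifies $Z$ with $S_1 \times_X S_2$, exhibiting the latter as a light profinite set.
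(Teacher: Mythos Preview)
Your argument is correct and follows the same overall strategy as the paper: part (1) is handled identically (cover by representables, extract a finite subfamily; conversely pull back to a qc representable), and part (2) reduces in both cases to the fact that a qc subobject of a light profinite set is itself light profinite. The only difference is packaging: the paper isolates this last fact as a separate remark (its \cref{rmk:qc}, deferring the implication $(1)\Rightarrow(2)$ to \cite[Lemma~2.7]{CC}), whereas you prove it inline via the (epi, mono)-factorization argument—take a light profinite cover $T\twoheadrightarrow S_1\times_X S_2$, form the closed image $Z\subset S_1\times S_2$, and invoke uniqueness of image factorizations in a topos to identify $Z$ with $S_1\times_X S_2$. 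Your version is therefore more self-contained; the paper's is terser but relies on the external citation.
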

\begin{proof}
(1) The object $X$ can be written as a colimit $\colim[i\in I]S_i$ of light profinite sets. We obtain a surjection $\bigsqcup_{i\in I}S_i\twoheadrightarrow\colim[i\in I]S_i\simeq X$. If $X$ is qc, there exists a finite subset $J\subset I$ such that $\bigsqcup_{j\in J}S_j\to X$ is a cover of $X$ again. This gives the desired surjection from a light profinite set.

Conversely, we assume that $X$ admits a surjection $S\twoheadrightarrow X$ from a light profinite set. For any cover $\bigsqcup_{i\in I}U_i\twoheadrightarrow X$ of $X$, we obtain a new cover $S\times\bigsqcup_{i\in I}U_i\twoheadrightarrow S$ of $S$. Since $S$ is qc, there exists a finite subset $J\subset I$ such that $S\times\bigsqcup_{j\in J}U_j\twoheadrightarrow S$ is a cover of $S$ again. Therefore, we obtain the desired finite cover $\bigsqcup_{j\in J}U_j\twoheadrightarrow X$.

(2) For any diagram $S_1\to X\leftarrow S_2$ in $\Cond$ from any light profinite sets, we have $S_1\times_XS_2$ is a light condensed subset of $S_1\times S_2$. Therefore, the desired characterization of qs follows from \cref{rmk:qc} below.
\end{proof}

\begin{rmk}\label{rmk:qc}
Let $S$ be a light profinite set, and let $X$ be a light condensed subset of $S$. The following are equivalent:
\begin{enumerate}
    \item The object $X$ is qc.
    \item The object $X$ is represented by a closed subset of $S$.
    \item The object $X$ is a light profinite set.
\end{enumerate}
In fact, noting that $(2)\Rightarrow(3)$ and $(3)\Rightarrow(1)$ follows clearly, it suffices to show the implication $(1)\Rightarrow(2)$. It 
essentially follows from \cite[Lemma 2.7]{CC}.
\exend
\end{rmk}

\begin{thm}[{\cite[Proposition 2.2.8]{Solid}}]\label{thm:qcqs}
The condensification functor induces an equivalence $\mathrm{CHaus}^\text{light}\xrightarrow{\sim}\Cond_\text{qcqs}$. Moreover, there exists an equivalence $\mathrm{Ind}_{\text{inj}}(\mathrm{CHaus}^\text{light})\simeq\Cond_\text{qs}$ (by \cite[Proposition 3.1.24]{Cas24}).
\end{thm}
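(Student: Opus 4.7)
The plan is to prove full faithfulness, verify the essential image lies in $\Cond_{\text{qcqs}}$, establish essential surjectivity, and then deduce the $\mathrm{Ind}$-injective statement.

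First I would establish that condensification restricted to $\mathrm{CHaus}^\text{light}$ is fully faithful. For $K,L\in\mathrm{CHaus}^\text{light}$, a morphism $\underline{K}\to\underline{L}$ evaluated on a point gives a set map $K\to L$ via $\underline{K}(\ast)=K$. Picking a light profinite cover $S\twoheadrightarrow K$ as in \cref{rmk:light profinite cover}, the composite $\underline{S}\to\underline{K}\to\underline{L}$ corresponds to an element of $\mathrm{Cont}(S,L)$, which is continuous; since $S\twoheadrightarrow K$ is a quotient map (surjection from compact to Hausdorff), the induced map $K\to L$ is continuous. Naturality of this reconstruction gives the bijection on Hom-sets.

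Next, for $K\in\mathrm{CHaus}^\text{light}$ the cover $\underline{S}\twoheadrightarrow\underline{K}$ shows $\underline{K}$ is qc by \cref{pro:qc and qs}(1), and for any span $S_1\to\underline{K}\leftarrow S_2$ of light profinite sets the fibre product $S_1\times_{\underline{K}}S_2$ is the preimage of the closed diagonal $\Delta_K\subset K\times K$ under $S_1\times S_2\to K\times K$, so it is a closed subset of a light profinite set, hence light profinite by \cref{rmk:qc}; thus $\underline{K}$ is qs. The main work is essential surjectivity: given a qcqs $X$, \cref{pro:qc and qs}(1) gives a surjection $\pi:\underline{S}\twoheadrightarrow X$, and by qs the fibre product $R:=\underline{S}\times_X\underline{S}$ is qc, hence represented by a closed light profinite subset $R\subset S\times S$ (\cref{rmk:qc}), which is an equivalence relation. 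I would then form the topological quotient $K:=S/R$: it is compact Hausdorff since $R$ is closed, and it is metrizable (hence light) by embedding into $[0,1]^{\mathbb{N}}$ via a countable separating family of $R$-invariant continuous functions $S\to[0,1]$, extracted from the separable space $\mathrm{Cont}(S,[0,1])$ and invoking \cref{thm:Urysohn}. Since surjections are effective epimorphisms in the topos $\Cond$, $X\simeq\mathrm{coeq}(R\rightrightarrows\underline{S})$; one then checks that $\underline{S}\twoheadrightarrow\underline{K}$ is likewise a surjection in $\Cond$ (by covering fibre products $T\times_K S$ with light profinite sets, \cref{rmk:light profinite cover}) whose kernel pair equals $R$, yielding $\underline{K}\simeq X$. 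The main obstacle here is the metrizability of $S/R$, which does not follow formally from quotient-topology generalities and requires the separability argument indicated.

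For the moreover part, given a qs $X$, each element $x\in X(T)$ for $T\in\mathrm{ProFin}^\text{light}$ factors through its image $Y:=\mathrm{im}(T\to X)\subset X$, which is qc by \cref{pro:qc and qs}(1) and qs as a subobject of the qs $X$, hence qcqs. These qcqs subobjects form a filtered poset under inclusion with colimit $X$ in $\Cond$; by the first part they correspond to objects of $\mathrm{CHaus}^\text{light}$, realising $X$ as an object of $\mathrm{Ind}_{\text{inj}}(\mathrm{CHaus}^\text{light})$, and the equivalence then follows from \cite[Proposition 3.1.24]{Cas24}.
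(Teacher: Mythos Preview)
The paper does not actually supply a proof of this theorem: it is stated with citations to \cite[Proposition 2.2.8]{Solid} and \cite[Proposition 3.1.24]{Cas24}, and only the auxiliary construction of a light profinite cover (\cref{rmk:light profinite cover}) is spelled out. Your proposal therefore goes well beyond what the paper provides, and your outline follows the standard route one finds in those references: full faithfulness via a profinite cover and the quotient-map property, qcqs via \cref{pro:qc and qs} and \cref{rmk:qc}, essential surjectivity by realising a qcqs object as the quotient of a light profinite set by a closed equivalence relation, and the qs case by exhausting with qcqs subobjects.

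Your argument is essentially correct, but one step is phrased in a way that does not quite work. For the metrizability of $K=S/R$ you say a countable separating family of $R$-invariant functions can be ``extracted from the separable space $\mathrm{Cont}(S,[0,1])$''. A countable dense subset of $\mathrm{Cont}(S,[0,1])$ has no reason to contain any $R$-invariant functions. The clean argument is: pullback along $S\twoheadrightarrow K$ embeds $\mathrm{Cont}(K,[0,1])$ isometrically into the separable metric space $\mathrm{Cont}(S,[0,1])$, so $\mathrm{Cont}(K,[0,1])$ is itself separable; a countable dense subset of it separates points of $K$ (by Urysohn's lemma on the compact Hausdorff space $K$), and this gives the embedding $K\hookrightarrow[0,1]^{\mathbb N}$ of \cref{thm:Urysohn}. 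With that adjustment the remainder of your proof goes through.
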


\section{Comparisons of Sites}

In this section, we will compare Grothendieck topologies in $\mathrm{ProFin}^\text{light}$, $\Stone_{\F_p}$ and $\Stone^{\delta,\land}$.

\subsection{Faithfully Flat Covers}

we will review the comparison between the fpqc topology on $\Ring_{\F_p}$ and the topology on $\mathrm{ProFin}$ in \cite{Gre24}.

\begin{lem}[{\cite[Proposition 1.4 and Lemma 1.6]{Gre24}} and {\cite[\href{https://stacks.math.columbia.edu/tag/092N}{[092N]}]{Sta}}]\label{lem:weakly etale}
Let $k$ be a ring. A Stone $k$-algebra is a weakly \'{e}tale $k$-algebra. Moreover, a map $A\to B$ of Stone $k$-algebras is faithfully flat if and only if the induced map $\mathrm{Spec}(B)\to\mathrm{Spec}(A)$ of spectra is a surjection.
\end{lem}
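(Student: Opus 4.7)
The plan is to deduce both assertions from standard closure properties of weakly \'etale morphisms. For the first assertion, the key observation is that for any finite set $S$, the ring $k^S$ is a finite product of copies of $k$, hence finite \'etale---in particular weakly \'etale---over $k$. By \cref{dfn:Stone algebra}, every Stone $k$-algebra is obtained as a filtered colimit of such $k^S$, and the class of weakly \'etale $k$-algebras is closed under filtered colimits (this is precisely the content of the cited Stacks tag). Composing these observations, every Stone $k$-algebra is weakly \'etale over $k$.

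For the second assertion, let $f \colon A \to B$ be a map of Stone $k$-algebras. By the first part, both structure maps $k \to A$ and $k \to B$ are weakly \'etale. The two-out-of-three property for weakly \'etale morphisms (also recorded in the cited Stacks tag) then forces $f$ itself to be weakly \'etale over $k$, and in particular $f$ is flat. Since flatness is automatic, the definition of faithful flatness reduces to the requirement that $f$ be faithful, i.e.\ that $\mathrm{Spec}(f)$ be surjective; this is the standard spectral criterion for faithful flatness of a flat ring map.

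No serious obstacle is expected: the proof is a matter of invoking the closure properties of weakly \'etale morphisms at the right moments, together with the definition of a Stone $k$-algebra as a filtered colimit of the obviously finite \'etale algebras $k^S$. The only point where one has to be slightly careful is the two-out-of-three step, but this is classical and requires no assumption on $k$ beyond being a ring.
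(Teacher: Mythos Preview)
Your argument is correct and is exactly the intended one: the paper does not give its own proof but only cites \cite[Proposition~1.4 and Lemma~1.6]{Gre24} and the Stacks Project, and your write-up reproduces that argument (finite \'etaleness of $k^S$, stability of weakly \'etale under filtered colimits, and cancellation for weakly \'etale maps to force flatness of $A\to B$). The only cosmetic point is that the filtered-colimit stability and the two-out-of-three property for weakly \'etale maps are recorded in distinct Stacks tags, so ``also recorded in the cited Stacks tag'' is a slight over-attribution, but this has no bearing on the mathematics.
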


So, we obtain the isomorphism of sites between profinite sets and Stone $k$-algebras.

\begin{thm}[{\cite[Proposition 1.20]{Gre24}}]\label{thm:comparison of sites}
Let $k$ be a field. There is an isomorphism of sites:
\[
(\mathrm{ProFin},\tau_\text{eff})\simeq((\Stone_k)^\text{op},\tau_\text{fpqc}),
\]
where $\tau_\text{eff}$ (resp. $\tau_\text{fpqc}$) denotes the Grothendieck topology defined by finite jointly surjective covers (resp. faithfully flat covers).
\end{thm}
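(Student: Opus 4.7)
The plan is to leverage the contravariant equivalence $\Psi : (\Stone_k)^{\mathrm{op}} \xrightarrow{\sim} \mathrm{ProFin}$, $A \mapsto \mathrm{Spec}(A)$, supplied by Stone duality (\cref{thm:Stone duality}), and verify that under $\Psi$ the covering families of $\tau_{\mathrm{fpqc}}$ correspond exactly to those of $\tau_{\mathrm{eff}}$. Since an isomorphism of sites is an equivalence of underlying categories that both preserves and reflects covers, only the cover correspondence requires argument.

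A useful reduction is that $\Stone_k$ admits finite products, inherited from the isomorphism $k^{S_1} \times k^{S_2} \simeq k^{S_1 \sqcup S_2}$ and stable under filtered colimits; dually, $\Psi$ sends finite products in $\Stone_k$ to finite disjoint unions in $\mathrm{ProFin}$, so that $\mathrm{Spec}(\prod_{i=1}^n B_i) = \bigsqcup_{i=1}^n \mathrm{Spec}(B_i)$. This lets me reduce assertions about finite covering families to assertions about the single combined map $A \to \prod_i B_i$.

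For one direction, starting from a finite jointly surjective family $\{S_i \to S\}_{i=1}^n$ in $\mathrm{ProFin}$, I apply $\Psi^{-1} = \mathrm{Cont}(-,k)$ to get maps $\{A \to B_i\}$ in $\Stone_k$ inducing the original surjection $\bigsqcup_i S_i \twoheadrightarrow S$ on spectra; by \cref{lem:weakly etale}, each $B_i$ is flat over $A$ and the combined map $A \to \prod_i B_i$ is faithfully flat, so $\{A \to B_i\}$ is an fpqc cover. In the opposite direction, a cover $\{A \to B_i\}_{i\in I}$ in $\tau_{\mathrm{fpqc}}$ gives a surjection $\bigsqcup_i \mathrm{Spec}(B_i) \twoheadrightarrow \mathrm{Spec}(A)$ again by \cref{lem:weakly etale}; since $\mathrm{Spec}(A)$ is profinite hence quasicompact, a finite subfamily $J \subset I$ already surjects, and this finite subcover lies in $\tau_{\mathrm{eff}}$ and refines the original family. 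Because Grothendieck topologies are determined by their covers up to refinement, the two topologies coincide under $\Psi$.

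The main obstacle is the clean handling of possibly infinite fpqc families together with the bookkeeping that $\tau_{\mathrm{fpqc}}$ on $\Stone_k$ is generated up to refinement by its finite faithfully flat covers; once that is in place, \cref{lem:weakly etale} reduces everything to the single-map statement ``ff $\Leftrightarrow$ surjective on $\mathrm{Spec}$''. A small auxiliary check worth verifying is that finite products of Stone $k$-algebras remain Stone, which is immediate from the duality above, since $\prod_{i=1}^n \mathrm{Cont}(S_i,k) \simeq \mathrm{Cont}(\bigsqcup_i S_i, k)$.
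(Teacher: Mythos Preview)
The paper does not supply its own proof of this statement; it records it as \cite[Proposition 1.20]{Gre24} and places \cref{lem:weakly etale} immediately before it as the evident key input. Your argument follows exactly that route---Stone duality (\cref{thm:Stone duality}) for the underlying equivalence of categories, then \cref{lem:weakly etale} to match covers---so in substance you have reconstructed what the paper (and \cite{Gre24}) intends.

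One point deserves correction. Your extraction of a finite subfamily from a possibly infinite fpqc cover via ``$\mathrm{Spec}(A)$ is profinite hence quasicompact'' does not work as stated: the images of maps $\mathrm{Spec}(B_i)\to\mathrm{Spec}(A)$ between profinite sets are closed rather than open, and a cover of a compact space by closed sets need not admit a finite subcover. For instance, cover $\Z_p=\mathrm{Spec}(\mathrm{Cont}(\Z_p,k))$ by its singletons; each evaluation map $\mathrm{Cont}(\Z_p,k)\to k$ is flat since Stone $k$-algebras are absolutely flat, the family is jointly surjective on spectra, yet no finite subfamily surjects. The finiteness you need is not a consequence of compactness of the target but is part of the \emph{definition} of an fpqc cover---the ``qc'' in fpqc requires that every affine open of the base be hit by the images of finitely many affine opens upstairs. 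You gesture at this in your closing paragraph (``$\tau_{\mathrm{fpqc}}$ on $\Stone_k$ is generated up to refinement by its finite faithfully flat covers''), but the body of the proof misattributes the source of finiteness. With that fix the argument goes through.
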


\begin{pro}[{\cite[Proposition 2.16]{Gre24}}]
Let $k$ be a field. A profinite set $S$ corresponds to $\mathrm{Spec}(\mathrm{Cont}(S,k))$ as the fpqc sheaf.
\end{pro}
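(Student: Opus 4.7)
The plan is to unwind the site isomorphism of \cref{thm:comparison of sites} at the level of representable sheaves. First I would note that both sides of that equivalence are subcanonical: in $(\mathrm{ProFin},\tau_\text{eff})$ finite jointly surjective families are effective epimorphisms of topological spaces, while the fpqc topology on $\Ring_k$ is subcanonical by Grothendieck's classical faithfully flat descent, and this restricts to $((\Stone_k)^\text{op},\tau_\text{fpqc})$ since $\Stone_k\hookrightarrow\Ring_k$ is fully faithful. Hence every representable presheaf on either side is automatically a sheaf, so no sheafification is needed.

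Next I would trace a profinite set $S$ through the equivalence. By \cref{thm:Stone duality}, $S$ corresponds to the Stone $k$-algebra $\mathrm{Cont}(S,k)$, so the representable sheaf $h_S=\mathrm{Cont}(-,S)$ on $\mathrm{ProFin}$ corresponds under the site isomorphism to the Yoneda sheaf on $(\Stone_k)^\text{op}$ sending $B\in\Stone_k$ to $\Hom_{\Stone_k}(\mathrm{Cont}(S,k),B)$.

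Finally I would compare this with $\mathrm{Spec}(\mathrm{Cont}(S,k))$ regarded as an fpqc sheaf on $\Ring_k$. By the functor of points, the latter sends $A\in\Ring_k$ to $\Hom_{\Ring_k}(\mathrm{Cont}(S,k),A)$; restricting along the fully faithful inclusion $\Stone_k\hookrightarrow\Ring_k$ recovers exactly the representable sheaf identified in the previous step. As a reality check, the bijection $\mathrm{Spec}(B)\simeq\Hom_k(B,k)$ for Stone $B$ provided by the second half of \cref{thm:Stone duality} rewrites $\Hom_{\Ring_k}(\mathrm{Cont}(S,k),B)$ as $\mathrm{Cont}(\mathrm{Spec}(B),S)$, matching the $\mathrm{ProFin}$ side of the site isomorphism on the nose.

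The argument presents no serious obstacle; it is essentially a Yoneda bookkeeping exercise through the site isomorphism of \cref{thm:comparison of sites}. The only point requiring mild attention is functoriality in $S$, which is immediate because continuous maps $f\colon T\to S$ correspond via precomposition to $k$-algebra maps $\mathrm{Cont}(S,k)\to\mathrm{Cont}(T,k)$, hence to morphisms $\mathrm{Spec}(\mathrm{Cont}(T,k))\to\mathrm{Spec}(\mathrm{Cont}(S,k))$ of the associated affine schemes.
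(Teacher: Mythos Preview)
The paper does not supply its own proof of this proposition; it is stated purely as a citation to \cite[Proposition 2.16]{Gre24} and no argument is given in the text. There is therefore nothing in the paper to compare your proposal against.

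That said, your approach is correct and is the natural one: subcanonicity on both sides ensures no sheafification is needed, and transporting the representable sheaf $h_S$ through the site isomorphism of \cref{thm:comparison of sites} lands on the Yoneda sheaf of $\mathrm{Cont}(S,k)$ in $(\Stone_k)^{\mathrm{op}}$, which by full faithfulness of $\Stone_k\hookrightarrow\Ring_k$ agrees with the restriction of the functor of points of $\mathrm{Spec}(\mathrm{Cont}(S,k))$. This is essentially a Yoneda bookkeeping exercise, as you say, and is presumably what Gregoric's proof does as well.
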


\subsection{$p$-Completely Faithfully Flat Covers}

We will introduce the notion of completely flatness, and compare the some topologies.

\begin{dfn}[{\cite[Section 1.2]{BS22}}, completely (faithfully) flat maps]
Let $A$ be a ring and $I$ be a finitely generated ideal of $A$. A ring map $A\to B$ is \emph{$I$-completely (faithfully) flat} if $B\otimes_A^LA/I$ is concentrated in degree $0$, and the induced map $A/I\to B/I$ is (faithfully) flat. If $I$ is generated by a single element $d$, we abbreviate $(d)$-completely flatness as $d$-completely flatness.
\exend
\end{dfn}

\begin{lem}\label{lem:cover comparison}
The $p$-completely faithful flatness in $\Perf^{\delta,\land}$ coincides with the faithful flatness in $\Perf_{\F_p}$ via the equivalence in \cref{thm:Witt-mod p}.
\end{lem}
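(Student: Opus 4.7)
The plan is to unwind the definition of $p$-completely faithful flatness for maps of $p$-complete perfect $\delta$-rings and observe that, thanks to the $p$-torsion-freeness built into $\Perf^{\delta,\land}$, the derived condition collapses to the underived one after reducing modulo $p$.

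Concretely, given a map $f : A \to B$ in $\Perf^{\delta,\land}$, I would first recall that by \cref{lem:strict p} both $A$ and $B$ are $p$-torsion-free and $p$-complete (being of the form $W(R)$ for $R \in \Perf_{\F_p}$). Using the Koszul resolution $[A \xrightarrow{p} A] \xrightarrow{\simeq} A/p$ of $A/p$ as an $A$-module, we compute
\[
B \otimes_A^L A/p \simeq [B \xrightarrow{p} B],
\]
and since $p$ is a non-zero-divisor on $B$, the right-hand side is quasi-isomorphic to $B/p$ concentrated in degree $0$. Hence the derived part of the definition of $p$-completely faithful flatness is automatic for any map in $\Perf^{\delta,\land}$, and the remaining condition is precisely that $A/p \to B/p$ be faithfully flat as a map of $\F_p$-algebras.

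Next I would invoke \cref{thm:Witt-mod p}: the equivalence $\Perf^{\delta,\land} \simeq \Perf_{\F_p}$ sends $f$ to $f \bmod p : A/p \to B/p$, so under this equivalence $p$-completely faithfully flat maps correspond exactly to faithfully flat maps of perfect $\F_p$-algebras. This is the asserted coincidence of notions.

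There is essentially no serious obstacle here; the only subtlety to keep in view is that the derived tensor product must be controlled on $B$ (not just $A$), which is why one genuinely needs $p$-torsion-freeness of \emph{both} source and target. Since this is automatic in $\Perf^{\delta,\land}$ by \cref{lem:strict p}, the argument goes through cleanly.
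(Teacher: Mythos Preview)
Your proposal is correct and essentially matches the paper's proof: both arguments use the $p$-torsion-freeness from \cref{lem:strict p} to see that $B\otimes_A^L A/p\simeq B/p$ is concentrated in degree $0$, reducing the $p$-complete faithful flatness condition to ordinary faithful flatness of $A/p\to B/p$. The only organizational difference is that the paper checks the two implications separately (one being ``by definition'', the other by passing to Witt vectors), whereas you observe once and for all that the derived condition is vacuous in $\Perf^{\delta,\land}$ and thereby handle both directions simultaneously.
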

\begin{proof}
By definition, a $p$-completely faithfully flat map $A\to B$ in $\Perf^{\delta,\land}$ induces a faithfully flat map $A/p\to B/p$ in $\Perf_{\F_p}$.

For a faithfully flat map $A\to B$ in $\Perf_{\F_p}$, the induced map $W(A)/p\simeq A\to W(B)/p\simeq B$ is clearly faithfully flat, and we have 
\[
W(B)\otimes_{W(A)}^L W(A)/p\simeq W(B)/p,
\]
noting that $W(A)$ and $W(B)$ are $p$-torsion-free. So, the canonical map $W(A)\to W(B)$ is $p$-completely faithfully flat.
\end{proof}

\begin{thm}[the comparison of sites]\label{thm:comparison of sites 2}
There is an isomorphism of sites as follows:
\[
(\mathrm{ProFin},\tau_\text{eff})\simeq((\Stone^{\delta,\land})^\text{op},\tau_{p\text{-fpqc}}),
\]
where the Grothendieck topology $\tau_{p\text{-fpqc}}$ means the $p$-completely faithfully flat topology.
\end{thm}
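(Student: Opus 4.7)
The plan is to obtain the isomorphism of sites by chaining together the three preceding results: \cref{thm:Stone duality 2} provides the underlying equivalence of categories, \cref{thm:comparison of sites} provides the matching of topologies on the $\Stone_{\F_p}$-side, and \cref{lem:cover comparison} bridges the two. The underlying equivalence $(\Stone^{\delta,\land})^\text{op}\simeq\mathrm{ProFin}$ is already given by \cref{thm:Stone duality 2}, realized as the composite
\[
(\Stone^{\delta,\land})^\text{op}\xrightarrow[\simeq]{(-)/p}(\Stone_{\F_p})^\text{op}\xrightarrow[\simeq]{\mathrm{Spec}}\mathrm{ProFin},
\]
with inverse given by $S\mapsto\mathrm{Cont}(S,\F_p)\mapsto W(\mathrm{Cont}(S,\F_p))\simeq\mathrm{Cont}(S,\Z_p)$ (\cref{lem:Stone duality}). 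So the only thing left to verify is that the Grothendieck topologies correspond under this equivalence.

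Next I would reduce covering families to single morphisms. The category $\Stone^{\delta,\land}$ admits finite coproducts (corresponding, via $W$, to finite products of Stone $\F_p$-algebras, equivalently to disjoint unions of profinite sets), so by standard fpqc-type arguments a finite family $\{A\to B_i\}_{i\in I}$ in $\Stone^{\delta,\land}$ is a $\tau_{p\text{-fpqc}}$-cover if and only if the induced single morphism $A\to\prod_{i\in I}B_i$ is $p$-completely faithfully flat. Combining \cref{lem:cover comparison} with the fact that $W(-)$ preserves finite products reduces this to the statement that the induced map $A/p\to\prod_i(B_i/p)$ is faithfully flat in $\Stone_{\F_p}$; that is, $\{A/p\to B_i/p\}_i$ is an fpqc-cover. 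By \cref{thm:comparison of sites} applied to the field $k=\F_p$, this is in turn equivalent to the corresponding family in $\mathrm{ProFin}$ being a finite jointly surjective family, i.e.\ a $\tau_\text{eff}$-cover. Chaining these equivalences yields the required compatibility.

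The content is therefore entirely an assembly of previously proved statements, and there is no genuinely hard step. The closest thing to an obstacle is the bookkeeping verification that the reduction from families to single morphisms is compatible with Witt vectors; this amounts to the observation that for a finite family $\{B_i\}$ of Stone $\delta$-rings one has $W\bigl(\prod_i B_i/p\bigr)\simeq\prod_i W(B_i/p)\simeq\prod_i B_i$ (using $p$-torsion-freeness and $p$-completeness), so the passage from a finite cover in $\Stone^{\delta,\land}$ to a single $p$-completely faithfully flat morphism is compatible with the analogous passage in $\Stone_{\F_p}$. Once this is checked, the three cited results combine without further friction to give the isomorphism of sites.
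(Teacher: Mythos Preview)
Your proposal is correct and follows essentially the same route as the paper: factor the equivalence through $\Stone_{\F_p}$, invoke \cref{thm:comparison of sites} for the $\mathrm{ProFin}\simeq(\Stone_{\F_p})^\text{op}$ step, and use \cref{lem:cover comparison} to identify $\tau_{p\text{-fpqc}}$ with $\tau_\text{fpqc}$ across the Witt/mod-$p$ equivalence. The paper's proof is terser and leaves the family-to-single-morphism reduction and the finite-product compatibility of $W$ implicit, whereas you spell these out; but the logical skeleton is identical.
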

\begin{proof}
By \cref{thm:comparison of sites}, it suffices to show the following isomorphism of sites:
\[
((\Stone_k)^\text{op},\tau_\text{fpqc})\simeq((\Stone^{\delta,\land})^\text{op},\tau_{p\text{-fpqc}}).
\]
Note that $\Stone^{\delta,\land}$ is isomorphic to $\Stone_{\F_p}$ by \cref{dfn:Stone delta-rings}, \cref{lem:cover comparison} gives the desired isomorphism.
\end{proof}

Also, we can obtain a characterization of Stone $\delta$-rings.

\begin{pro}
A $p$-complete $\delta$-ring $A$ is Stone if and only if the canonical map $A\to A_{\varphi=1}^\land$ is $p$-completely faithfully flat.
\end{pro}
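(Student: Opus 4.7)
The plan is to use the unit $\eta\colon A\to A_{\varphi=1}^\land$ of the adjunction of \cref{dfn:delta-(co)invariant}, together with the observation that Stone $\delta$-rings have trivial Frobenius lift (by the previous proposition). Since Frobenius lifts are natural under $\delta$-ring maps and $\varphi$ acts as the identity on the Stone target, the unit equalizes $\varphi_A$ and $\mathrm{id}_A$; in other words,
\[
(\varphi_A-\mathrm{id}_A)(A)\subseteq\ker\eta.
\]
Consequently, to deduce that $A$ is Stone it suffices to show that $\eta$ is injective, and the whole converse implication reduces to this injectivity.

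The forward direction is essentially formal: if $A$ is already Stone then $\varphi_A=\mathrm{id}_A$, the cokernel of $\varphi_A-\mathrm{id}_A$ is $A$ (already $p$-complete), and the canonical map is an isomorphism, which is trivially $p$-completely faithfully flat.

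For the converse, I would extract injectivity of $\eta$ from $p$-completely faithful flatness as follows. Writing $B:=A_{\varphi=1}^\land$, the vanishing $\mathrm{Tor}_1^A(B,A/p)=B[p]=0$ supplied by $p$-completely flatness yields that $B$ is $p$-torsion-free, and faithful flatness of $A/p\to B/p$ gives injectivity modulo $p$, i.e.\ $\eta^{-1}(pB)=pA$. I would then upgrade this to $\eta^{-1}(p^nB)=p^nA$ by induction on $n$: any $a$ in the left-hand side lies in $pA$ by the base case, so $a=pa_1$, and $p\eta(a_1)\in p^nB$ combined with $p$-torsion-freeness of $B$ forces $\eta(a_1)\in p^{n-1}B$, whence $a_1\in p^{n-1}A$ by the inductive hypothesis and $a\in p^nA$. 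Since $A$ is $p$-complete and hence $p$-adically separated, $\ker\eta\subseteq\bigcap_{n\geq 1}p^nA=0$, so $\eta$ is injective and the previous paragraph concludes that $\varphi_A=\mathrm{id}_A$, i.e.\ $A$ is Stone.

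I expect the main (mild) obstacle to be the inductive passage from mod-$p$ injectivity to mod-$p^n$ injectivity of $\eta$; the key input making this clean is the $p$-torsion-freeness of $B$, which itself falls out of the $p$-completely flat hypothesis. Notably the argument does not require $A$ itself to be $p$-torsion-free at the outset: that property is derived a posteriori from injectivity of $\eta$ into the torsion-free ring $B$.
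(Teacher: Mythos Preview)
Your overall strategy is sound and differs from the paper's. The paper works modulo $p$: it observes that $B/p$ identifies with the Frobenius-coinvariants $(A/p)_{\mathrm{Frob}=1}$, which is a \emph{quotient} of $A/p$; faithful flatness then forces this surjection to be injective, hence an isomorphism, so $A/p$ is $p$-Boolean, and after separately checking that $A$ is $p$-torsion-free one concludes $A\simeq W(A/p)$ is Stone. You instead stay integral, proving that the unit $\eta$ itself is injective and then using the equalizing property $\eta\circ\varphi_A=\varphi_B\circ\eta=\eta$ to deduce $\varphi_A=\mathrm{id}_A$ directly from the previous proposition. Your route is a bit cleaner in that $p$-torsion-freeness of $A$ falls out as a consequence (via the embedding into the torsion-free $B$) rather than being established as a separate intermediate step.

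There is, however, one genuine slip. The identification $\mathrm{Tor}_1^A(B,A/p)=B[p]$ you invoke is only valid once $p$ is already a nonzerodivisor on $A$, so that $[A\xrightarrow{p}A]$ is a free resolution of $A/p$; you explicitly say you are not assuming this, so $p$-complete flatness alone does not yield $B[p]=0$ via that computation. The fix is immediate and does not touch the rest of your argument: $B=A_{\varphi=1}^\land$ lies in $\Stone^{\delta,\land}$ by the very definition of the left adjoint in \cref{dfn:delta-(co)invariant}, hence is of the form $W(R)$ for a Stone $\F_p$-algebra $R$ and is $p$-torsion-free by \cref{lem:strict p}. With this correction, your induction $\eta^{-1}(p^nB)=p^nA$ and the conclusion via $p$-adic separatedness of $A$ go through verbatim.
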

\begin{proof}
Obviously, the canonical map $ A\to A_{\varphi=1}^\land$ for any Stone $\delta$-ring $A$ is the identity map, in particular, a $p$-completely faithfully flat map.

Conversely, noting that $A_{\varphi=1}^\land$ is $p$-torsion-free, $A$ has no $p$-torsion element if the canonical map $A\to A_{\varphi=1}^\land$ is $p$-completely faithfully flat. Modulo $p$ reduction $A/p\to (A/p)_{\varphi/p=1}$ is faithfully flat, in particular, injective and surjective (i.e. bijective). So, $A/p$ is a Stone $\F_p$-algebra, which gives desired property for $A\simeq W(A/p)$.
\end{proof}

\section{Connection between $\delta$-Rings and Light Condensed Sets}

In this section, we will summarize the above discussion, and conclude the connection between $\delta$-rings and light condensed sets.

\subsection{Totally Disconnected Formal $\delta$-Stacks}

We examine the relation to light condensed sets according to \cite[Subsection V.3.1]{Sch24}. (We only focus on light setting, but some results hold for general setting.)

\begin{rmk}\label{rmk:light 1}
For any countably presented Stone $\delta$-ring $A$ (i.e. Stone $\delta$-ring which is a countably presented $\Z_p$-algebra), the corresponding profinite set $S$ is light. In fact, noting that $S$ is given by $\mathrm{Spec}(A/p)$ and $A/p$ is countably presented over $\F_p$, the lightness of $S$ follows from \cref{rmk:light Stone duality}.
\exend
\end{rmk}

\begin{dfn}[totally disconnected formal $\delta$-stacks]
We let $\mathrm{TD}^{\delta,\land}$ denote the opposite category of the category of countably presented  Stone $\delta$-rings, and $\mathrm{TD}\mathrm{Stk}^{\delta,\land}$ denote the category of sheaves on $\mathrm{TD}^{\delta,\land}$ with $p$-completely faithfully flat topology:
\[
\mathrm{TD}\mathrm{Stk}^{\delta,\land}:=\mathrm{Shv}_{p\text{-fpqc}}(\mathrm{TD}^{\delta,\land},\mathrm{Set}).
\]
We call an object in $\mathrm{TD}\mathrm{Stk}^{\delta,\land}$ by a \emph{totally disconnected formal $\delta$-stack}.
\exend
\end{dfn}

This object defined above is the analogue of fpqc stacks discussed in \cite{Gre24}.

\begin{rmk}\label{rmk:the terminology}
We work in the set-valued sheaves because the author would like to clarify the topological aspects of $\delta$-rings. Also, as in \cite[Remark 3.9]{Gre24}, there would be no problem with this terminology "stack".  
\exend
\end{rmk}

\subsection{Relation to Light Condensed Sets}

We give examples of $p$-complete $\delta$-rings from light condensed sets as an analogue of \cref{lem:Stone duality}. 

\begin{pro}\label{pro:Gelfand duality}
Let $K$ be a light compact Hausdorff space. The ring $\mathrm{Cont}(K,\Z_p)$ is a $p$-complete perfect $\delta$-ring.
\end{pro}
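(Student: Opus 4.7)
The approach is to realize $\mathrm{Cont}(K,\Z_p)$ as an equalizer, in the category of rings, of two objects already known to be $p$-complete perfect $\delta$-rings. First, using \cref{rmk:light profinite cover}, I would fix a surjection $f : S \twoheadrightarrow K$ from a light profinite set $S$. Because $K$ is Hausdorff, the diagonal $K \hookrightarrow K \times K$ is a closed embedding, so the fiber product $R := S \times_K S$ is a closed subspace of the light profinite set $S \times S$, and hence itself light profinite (closed subspaces of second countable compact Hausdorff spaces are again second countable and compact Hausdorff). Since $f$ is a continuous surjection of compact Hausdorff spaces it is a topological quotient; hence a continuous function $g : S \to \Z_p$ descends to $K$ exactly when $p_1^* g = p_2^* g$, where $p_1, p_2 : R \rightrightarrows S$ denote the two projections. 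This yields the equalizer presentation
\[
\mathrm{Cont}(K,\Z_p) \hookrightarrow \mathrm{Cont}(S,\Z_p) \rightrightarrows \mathrm{Cont}(R,\Z_p)
\]
in $\Ring$.

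By \cref{lem:Stone duality}, both $\mathrm{Cont}(S,\Z_p)$ and $\mathrm{Cont}(R,\Z_p)$ are Stone $\delta$-rings, in particular $p$-complete perfect $\delta$-rings. The uniqueness of $\delta$-structures on perfect $\delta$-rings provided by \cref{thm:Witt-mod p} forces $p_1^*$ and $p_2^*$ to be $\delta$-ring maps, commuting with the Frobenius lifts $\varphi_S, \varphi_R$. I would then verify that the subring $E := \mathrm{Cont}(K,\Z_p) \subset \mathrm{Cont}(S,\Z_p)$ inherits the required structure: $\delta_S$ preserves $E$ because both pullbacks are $\delta$-ring maps; $E$ is $p$-torsion-free as a subring of $\mathrm{Cont}(S,\Z_p)$; and $E$ is $p$-adically complete because it is the $p$-adically closed kernel of the subtraction map $\mathrm{Cont}(S,\Z_p) \to \mathrm{Cont}(R,\Z_p)$ between $p$-complete modules.

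What remains is the bijectivity of the induced Frobenius lift $\varphi_E = \varphi_S|_E$. Injectivity is inherited from the ambient $\varphi_S$. For surjectivity, given $e \in E$ I would pick the unique preimage $a \in \mathrm{Cont}(S,\Z_p)$ with $\varphi_S(a) = e$; then the injectivity of $\varphi_R$ applied to
\[
\varphi_R(p_1^* a - p_2^* a) = p_1^* \varphi_S(a) - p_2^* \varphi_S(a) = p_1^* e - p_2^* e = 0
\]
yields $p_1^* a = p_2^* a$, so $a \in E$. The main conceptual obstacles are (i) producing the light profinite cover $S \twoheadrightarrow K$, which is handled by \cref{rmk:light profinite cover}, and (ii) ensuring that the ring-theoretic equalizer agrees with the equalizer formed inside $\Perf^{\delta,\land}$; both ultimately rest on the uniqueness of $\delta$-structures on perfect $\delta$-rings from \cref{thm:Witt-mod p}.
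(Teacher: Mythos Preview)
Your argument is correct, but it takes a genuinely different route from the paper. The paper verifies the three conditions directly: $p$-torsion-freeness is checked pointwise, $p$-completeness is obtained from the identification $\mathrm{Cont}(K,\Z_p)\simeq\mathrm{LocConst}(K,\Z_p)^\land$, and perfection of $\mathrm{Cont}(K,\F_p)$ is argued by taking pointwise $p$-th roots and descending continuity along the light profinite cover $S\twoheadrightarrow K$ (in fact this last step is immediate, since the Frobenius on $\F_p$ is the identity, so $\mathrm{Cont}(K,\F_p)$ is already $p$-Boolean). You instead realize $\mathrm{Cont}(K,\Z_p)$ as the equalizer of $\mathrm{Cont}(S,\Z_p)\rightrightarrows\mathrm{Cont}(R,\Z_p)$ between Stone $\delta$-rings and then show by hand that the equalizer inherits $p$-torsion-freeness, $p$-completeness, and a bijective Frobenius lift. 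Your approach is more structural and has the pleasant side effect of producing exactly the equalizer presentation that underlies the Betti $\delta$-stack computation in \cref{exa:Betti delta-stacks}; the paper's approach is shorter and avoids the bookkeeping of checking that equalizers in $\Ring$ stay inside $\Perf^{\delta,\land}$. One small point worth tightening in your write-up: the claim that the kernel of $p_1^\ast-p_2^\ast$ is $p$-complete is best justified by noting that all rings involved are $p$-torsion-free, so classical and derived $p$-completeness agree and the latter is closed under limits.
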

\begin{proof}
It suffices to show that $\mathrm{Cont}(K,\Z_p)$ is $p$-torsion-free and $p$-complete, and $\mathrm{Cont}(K,\F_p)$ is a perfect $\F_p$-algebra. For the first part, the $p$-torsion-freeness follows from the fact that $\Z_p$ has no $p$-torsion, and the $p$-completeness follows from $\mathrm{Cont}(K,\Z_p)\simeq(\mathrm{LocConst}(K,\Z_p))^\land$. For the second part, we obviously obtain the $p$-th root of $f(x)$ for any function $f:K\to\F_p$ and $x\in K$, which gives the map $K\ni x\longmapsto f(x)^{1/p}\in\F_p$. It suffices to show that the map is continuous. If we take a surjection $S\twoheadrightarrow K$ from a light profinite set (for example, \cref{rmk:light profinite cover}), the composition $S\to\F_p$ is continuous by \cref{lem:Stone duality}. Note that the map $S\twoheadrightarrow K$ is a quotient map, we obtain the desired continuity.
\end{proof}

Finally, we discuss a relationship between totally disconnected formal $\delta$-stacks and light condensed set.

\begin{dfn}[{\cite[\href{https://stacks.math.columbia.edu/tag/00WV}{[00WV]}]{Sta}}, continuous functors]
Let $C,D$ be sites, and let $u:C\to D$ be a functor. The functor $u$ is \emph{continuous} if for any cover $\{U_i\to U\}_{i\in I}$ in $C$, the family $\{u(U_i)\to u(U)\}_{i\in I}$ is a cover in $D$, and for any map $V\to U$ in $C$, each map $u(V)\times_{u(U)}u(U_i)\to u(V\times_UU_i)$ is an isomorphism. 
\exend
\end{dfn}

\begin{pro}\label{lem:continuous}
The functor $u:\mathrm{TD}^{\delta,\land}\ni A\longmapsto\Hom^{\delta,\land}(A,\Z_p)=\mathrm{Spec}(A/p)\in\mathrm{ProFin}^\text{light}$ preserves finite limits and covers. In particular, $u$ is continuous.

Moreover, $u$ sends a final object in $\mathrm{TD}^{\delta,\land}$ to a final object $\ast$ in $\mathrm{ProFin}^\text{light}$.
\end{pro}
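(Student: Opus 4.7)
My plan is to identify $u$ with (the restriction of) the Stone duality equivalence, from which all of the claimed preservation properties will follow essentially formally. By \cref{thm:Stone duality 2} and \cref{rmk:Stone duality}, the assignment $A \mapsto \Hom^{\delta,\land}(A,\Z_p)$ already gives an equivalence $(\Stone^{\delta,\land})^{\mathrm{op}} \simeq \mathrm{ProFin}$, with inverse $S \mapsto \mathrm{Cont}(S,\Z_p)$. Combining the light version of Stone duality (\cref{rmk:light Stone duality}) with the Witt-modulo-$p$ equivalence (\cref{thm:Witt-mod p}), I would check that a Stone $\delta$-ring $A$ is countably presented over $\Z_p$ if and only if $A/p$ is a countable Stone $\F_p$-algebra, which in turn happens if and only if the corresponding profinite set $\mathrm{Spec}(A/p)$ is light. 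Consequently $u$ restricts to an equivalence of categories $u\colon \mathrm{TD}^{\delta,\land} \xrightarrow{\simeq} \mathrm{ProFin}^{\text{light}}$, as is already observed in \cref{rmk:light 1}.

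Given this equivalence, preservation of finite limits is automatic: any equivalence of categories preserves all limits that happen to exist, and $\mathrm{ProFin}^{\text{light}}$ has countable (hence finite) limits by \cref{pro:replete}. In particular, the fiber-product condition in the definition of a continuous functor — that the canonical map $u(V)\times_{u(U)}u(U_i)\to u(V\times_U U_i)$ is an isomorphism — holds for free.

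For preservation of covers, I would invoke the isomorphism of sites in \cref{thm:comparison of sites 2}, which matches $p$-completely faithfully flat covers with finite jointly surjective covers. Since a cover in $\mathrm{TD}^{\delta,\land}$ consists of finitely many maps among countably presented Stone $\delta$-rings, the comparison restricts compatibly to the light setting, and $u$ therefore sends covers to covers. Together with the fiber-product preservation above, this gives continuity of $u$. For the final assertion, $\Z_p = W(\F_p)$ is the initial object in $\Stone^{\delta,\land}$ and is countably presented, hence is the final object of $\mathrm{TD}^{\delta,\land}$; its image $u(\Z_p) = \mathrm{Spec}(\F_p) = \ast$ is the final object of $\mathrm{ProFin}^{\text{light}}$. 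I do not anticipate any serious obstacle here — the entire argument is a bookkeeping exercise once the equivalence of Stone duality is recognized in the light/countably presented setting.
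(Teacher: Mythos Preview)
Your proposal is correct and follows essentially the same route as the paper's (very terse) proof, which likewise deduces everything from recognizing $u$ as the light restriction of the Stone duality equivalence of \cref{thm:Stone duality 2} and \cref{thm:comparison of sites 2}, invoking \cref{rmk:Stone duality} and \cref{rmk:light 1} for covers and the opposite-category description for finite limits. One minor caution: your assertion that $u$ is an \emph{equivalence} onto $\mathrm{ProFin}^{\text{light}}$, which you attribute to \cref{rmk:light 1}, actually requires the converse direction (that every light profinite set comes from a countably presented Stone $\delta$-ring), and that converse is neither proved there nor needed here --- the preservation of finite limits, covers, and the final object already follows from $u$ being the restriction of an equivalence of sites to full subcategories closed under the relevant structure.
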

\begin{proof}
The claim for finite limits follows clearly, noting that $\mathrm{TD}$ is defined as the opposite category. The claim for covers follows from the previous claims (\cref{rmk:Stone duality} and \cref{rmk:light 1}). The final claim for final objects is obtained from $u(\Z_p)=\mathrm{Spec}(\F_p)=\ast$.
\end{proof}

\begin{cor}\label{cor:morphism of topoi}
There exists a morphism $\psi:\Cond\to\mathrm{TD}\mathrm{Stk}^{\delta,\land}$ of topoi such that it is induced by $u:\mathrm{TD}^{\delta,\land}\to\mathrm{ProFin}^\text{light}$. Explicitly, $\psi$ is given by the (pair of the) following functors of topoi:
\begin{enumerate}
    \item For any light condensed set $X\in\Cond$, we have $\psi_\ast X(A)\simeq X(u(A))$.
    \item The functor $\psi^\ast$ is the sheafification of the functor $u_p$ (defined in \cite[\href{https://stacks.math.columbia.edu/tag/00VC}{[00VC]}]{Sta}).
\end{enumerate}
\end{cor}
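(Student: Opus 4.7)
The plan is to apply the standard construction that turns a continuous, finite-limit preserving functor of sites into a morphism of the associated sheaf topoi; all the nontrivial verification has already been placed in Proposition~\ref{lem:continuous}, so the corollary reduces to citing the general machinery and reading off the explicit formulas for $\psi_\ast$ and $\psi^\ast$.

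First, I would recall the general recipe from the Stacks Project (tags 00WW and 00X1): for any continuous functor $u : C \to D$ between sites, restriction along $u$ carries sheaves to sheaves (this uses precisely the continuity of $u$, i.e.\ the cover and fiber-product conditions), giving a direct image $u_s : \mathrm{Shv}(D) \to \mathrm{Shv}(C)$ with $u_s F(X) = F(u(X))$. Its left adjoint $u^s : \mathrm{Shv}(C) \to \mathrm{Shv}(D)$ is obtained by sheafifying the presheaf-level functor $u_p$ of tag 00VC. When in addition $u$ commutes with finite limits, $u^s$ is left exact, so $(u^s, u_s)$ is a morphism of topoi $\mathrm{Shv}(D) \to \mathrm{Shv}(C)$.

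Specializing to $C = \mathrm{TD}^{\delta,\land}$, $D = \mathrm{ProFin}^\text{light}$, and $u(A) = \mathrm{Spec}(A/p)$, Proposition~\ref{lem:continuous} supplies exactly the two required hypotheses -- continuity and preservation of finite limits (together with the final object) -- so the recipe produces the desired morphism of topoi $\psi : \Cond \to \mathrm{TD}\mathrm{Stk}^{\delta,\land}$ with $\psi_\ast := u_s$ and $\psi^\ast := u^s$. Description (1) is then $\psi_\ast X(A) = X(u(A))$ by the defining formula for $u_s$, and description (2) is by construction the sheafification of $u_p$.

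There is no serious technical obstacle; the only point meriting care is the direction of the induced morphism of topoi. A continuous functor $u : C \to D$ yields a morphism of topoi $\mathrm{Shv}(D) \to \mathrm{Shv}(C)$ in the \emph{opposite} direction, so our $u : \mathrm{TD}^{\delta,\land} \to \mathrm{ProFin}^\text{light}$ correctly produces $\psi : \Cond \to \mathrm{TD}\mathrm{Stk}^{\delta,\land}$, matching the statement.
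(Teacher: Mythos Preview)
Your proposal is correct and follows essentially the same approach as the paper: both reduce the corollary to Proposition~\ref{lem:continuous} together with the general Stacks Project machinery for continuous, finite-limit preserving functors of sites. The paper's proof is a one-line citation of \cite[\href{https://stacks.math.columbia.edu/tag/00WX}{00WX}, \href{https://stacks.math.columbia.edu/tag/00X6}{00X6}]{Sta}, whereas you unpack the construction via tags 00WW/00X1 and helpfully note the reversal of direction, but the content is the same.
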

\begin{proof}
\cref{lem:continuous} and \cite[\href{https://stacks.math.columbia.edu/tag/00WX}{[00WX]} and \href{https://stacks.math.columbia.edu/tag/00X6}{[00X6]}]{Sta} give the desired properties.
\end{proof}

\begin{exa}\label{exa:Betti delta-stacks}
We examine the totally disconnected formal $\delta$-stack $\psi_\ast K\in\mathrm{TD}\mathrm{Stk}^{\delta,\land}$ where $K$ is a light compact Hausdorff space (, noting that $K$ can be considered as a qcqs light condensed set). This example is inspired by \cite[Example 11.12]{Sch22}.

First, when we assume that $K$ is light profinite, we see that $\psi_\ast K$ is isomorphic to $\mathrm{Cont}(K,\Z_p)$. In fact, by \cref{cor:morphism of topoi}, 
for any countably presented Stone $\delta$-ring $A\in\mathrm{TD}^{\delta,\land}$, there is the following isomorphism:
\[
(\psi_\ast K)(A)\simeq\underline{K}(\pi(A))=\mathrm{Cont}(K,\Hom^{\delta,\land}(A,\Z_p))=\Hom_\Cond(K,\Hom^{\delta,\land}(A,\Z_p))\simeq\Hom^{\delta,\land}(\mathrm{Cont}(K,\Z_p),A).
\]
We let $K_\text{Betti}^\delta$ denote the totally disconnected formal $\delta$-stack $\psi_\ast K$.

Next, assuming that $K$ is light compact Hausdorff, we can take a light profinite cover $S\twoheadrightarrow K$ and a (effective) equivalence relation $R=S\times_K S\subset S\times S$ such that $K$ is homeomorphic to $S/R$ (by \cref{rmk:light profinite cover} and \cref{thm:qcqs}). We claim that $\psi_\ast K\simeq S_\text{Betti}^\delta/R_\text{Betti}^\delta$. Note that, by \cref{cor:morphism of topoi}, the functor $\psi_\ast$ admits a right adjoint functor, we have $S_\text{Betti}^\delta/R_\text{Betti}^\delta\simeq\psi_\ast(\underline{S}/\underline{R})$. Also, by the proof of \cref{thm:qcqs}, $\underline{K}$ is isomorphic to $\underline{S}/\underline{R}$. Therefore, we obtain the desired isomorphism:
\[
\psi_\ast K\simeq\psi_\ast(\underline{S}/\underline{R})\simeq S_\text{Betti}^\delta/R_\text{Betti}^\delta.
\]
\exend
\end{exa}

\end{document}